\newtheorem{theorem}{Theorem}[section]
\newtheorem{lemma}{Lemma}[section]
\newtheorem{proposition}{Proposition}[section]
\newtheorem*{corollary*}{Corollary}
\newtheorem*{remark}{Remark}
\theoremstyle{definition}
\newtheorem{definition}{Definition}[section]
\newcommand{\var}{\mathrm{Var}}
\newcommand{\integers}{\mathbb{Z}}
\newcommand{\reals}{\mathbb{R}}
\newcommand{\nat}{\mathbb{N}}
\newcommand{\dd}{{\rm d}}
\newcommand{\Falg}{\mathcal{F}}
\newcommand{\eps}{\varepsilon}
\newcommand{\tr}{\top}
\newcommand{\prob}{\mathbb{P}}
\newcommand{\E}{\mathbb{E}}
\newcommand{\dyadic}{\mathcal{D}}
\newcommand{\xspace}{\mathcal{X}}
\newcommand{\yspace}{\mathcal{Y}}
\newcommand{\diff}{\mathrm{d}}
\newcommand{\given}{\,|\,}
\newcommand{\interval}{\mathcal{I}}
\title{Asymptotic properties of adaptive designs\\ through differentiability in quadratic mean}
\author{Dennis Christensen$^{1,2}$ \and Emil Aas Stoltenberg$^{1,3}$ \and Nils Lid Hjort$^1$}
\date{\small {$^1$Department of Mathematics, University of Oslo \\
$^2$Norwegian Defence Research Establishment (FFI) \\
$^3$Department of Data Science, BI Norwegian Business School}}
\newcommand{\vbb}[1]{\lVert#1\rVert}
\newcommand{\half}{\tfrac12}
\newcommand{\quart}{\tfrac14}
\providecommand{\keywords}[1]
{
  \small	
  \textbf{\textit{Keywords---}} #1
}
\begin{document}

\maketitle

\begin{abstract}
    There exist multiple regression applications in engineering, industry and medicine where the outcomes follow an adaptive experimental design in which the next measurement depends on the previous observations, so that the observations are not conditionally independent given the covariates. In the existing literature on such adaptive designs, results asserting asymptotic normality of the maximum likelihood estimator require regularity conditions involving the second or third derivatives of the log-likelihood. Here we instead extend the theory of differentiability in quadratic mean (DQM) to the setting of adaptive designs, which requires strictly fewer regularity assumptions than the classical theory. In doing so, we discover a new DQM assumption, which we call summable differentiability in quadratic mean (S-DQM). As applications, we first verify asymptotic normality for two classical adaptive designs, namely the Bruceton `up-and-down' design and the Robbins--Monro design. Next, we consider a more complicated problem, namely a Markovian version of the Langlie design.
    
\end{abstract}

\keywords{Adaptive design, asymptotic normality, differentiability in quadratic mean.}

\section{Introduction}\label{sec:introduction}
Consider a regression model $f_\theta(y_i\given x_i)$, parametrised by $\theta$, which aims to predict the outcome $y_i\in\mathcal{Y}$ from covariates $x_i\in\mathcal{X}$, for $i=1, \dots, n$.  There are numerous applications in industry, medicine and toxicology where the covariates follow an adaptive design of the form
\begin{equation}\label{eq::covariate_seq_general}
    X_{i} = h(\{(X_j,Y_j)\}_{1 \leq j \leq i-1}, U_{i-1}),
\end{equation}
for $i=2, 3, \dots$, where $h$ is some user-defined function. Here, the noise variables $U_i$ are assumed to be independent, and also independent of the sequence $\{(X_i, Y_i)\}_{i\geq1}$. In many applications, the noise variables are not present and so the function $h$ depends only deterministically on the previous observations. In any case, the distribution of the next input $X_{i}$ depends on $(X_1, Y_1),\ldots,(X_{i-1},Y_{i-1})$, losing conditional independence between the $Y_i$, given the $X_i$.

Adaptive designs of the form \eqref{eq::covariate_seq_general} arise naturally in binary regression problems when estimating the sensitivity of a material, by which we mean the susceptibility of the material to break, fracture, or explode under various levels of physical stimulus. For example, we might want to estimate the impact sensitivity of an energetic material, which is done by repeatedly dropping a weight of known mass from different (log) heights $X_1, \dots, X_n$ onto samples of the material in question, and observing whether each impact causes an explosion $(Y_i = 1)$ or not $(Y_i = 0)$ \citep{meyer2007explosives, christensen2024estimating}. The probability of observing an explosion is modelled using binary regression,
\begin{equation}\label{eq:binarymodel}
    \prob(Y_i = 1 \given X_i) = H(\theta^\top Z_i) = H(\alpha + \beta X_i),
\end{equation}
where $\theta = (\alpha,\beta)^\top $, $Z_i = (1, X_i)^\top$ and $H:\reals\to[0,1]$ is a fixed cumulative distribution function, typically the logistic sigmoid or probit function. In other related fields such as bioassay, multiple tests are typically performed together at each dosage level, with binomial outcomes. Therefore, the dosage levels are typically not readjusted in between every single measurement.

The dependence imposed by~\eqref{eq::covariate_seq_general} complicates the large-sample theory for the maximum likelihood estimator $\widehat\theta_n$ of $\theta$. Indeed, the likelihood theory cannot rely on conditional independence of the outcomes, nor can it rely on the data being independent and identically distributed (i.i.d.). Thus, asymptotic normality of $\widehat\theta_n$ must therefore be explicitly verified for tasks such as constructing confidence intervals and hypothesis testing. The former is of particular interest, as it has been demonstrated in multiple simulation studies that confidence intervals constructed via Fieller's theorem outperform the traditional approach from the delta method for binary regression \citep{abdelbasit1983experimental, cox1990fiellers, sitter1993accuracy, faraggi2003confidence, christensen2023improved}. See also \citet[Section 4.6]{schweder2016confidence} for an interpretation of Fieller's theorem through confidence distributions.

The asymptotic study of adaptive designs is not new, and many results have been achieved by extending the standard machinery for the i.i.d.~case to the adaptive case \citep{tsutakawa1967asymptotic, fahrmeir1987asymptotic, rosenberger1997asymptotic}. These results, however, require regularity conditions involving the existence of two or three derivatives of the model $\theta \mapsto f_{\theta}(y \given x)$, as well as domination assumptions allowing us to exchange differentiation and integration. In contrast, the present paper instead imposes a certain differentiability in quadratic mean (DQM) assumption on the the conditional density $f_{\theta}(y \given x)$, which we call {\it summable differentiability in quadratic mean} (S-DQM). In imposing this condition, we circumvent the traditional assumptions on the second or third derivatives of the conditonal density. In fact, even the first derivative of $f_{\theta}(y \given x)$ need not exist for all $(x,y)$. Our theory mirrors the standard quadratic mean theory for i.i.d.~data \citep{vandervaart1998asymptotic, pollard1997another}, but extends that theory to dependent regression data of the form described above, with adaptive designs and binary outcomes as a prime application. This S-DQM theory is presented in Section~\ref{sec:2}.

A factor that significantly complicates the analysis of adaptive designs, both using classical methods or S-DQM, is that one needs to study the properties of the covariate sequence $(X_n)$ on a case-to-case basis to ensure that the observed information has a limit in probability. This can be achieved in a number of ways, depending on the properties of the covariate sequence. For instance, it will be shown that if $X_n$ itself converges in probability, then the observed information also converges. Another simplifying scenario is when $(X_n)$ is a well-behaved Markov chain, in which case one can appeal to ergodic theorems. In Section~\ref{sec:illustrations}, we illustrate the use of both these strategies to verify asymptotic normality of the maximum likelihood estimator for the Bruceton and Robbins--Monro designs, both of which serve as classical adaptive designs in sensitivity testing. In Section~\ref{sec:langlie}, we consider a more complicated example, namely a Markovian version of the Langlie design. Interestingly, even though the design itself yields a discrete state space Markov chain $(X_n)$, we require a detour into continuous state space theory to achieve an irreducible chain to which an ergodic theorem can be applied.

\section{Local asymptotic normality for adaptive designs}\label{sec:2}
In this section we study local asymptotic normality expansions of likelihood ratio process in regression settings where the conditional densities of the outcomes are {\it summable differentiable in quadratic mean}, a notion we define below.
\subsection{Summable differentiability in quadratic mean (S-DQM)}
Throughout, $(X_1,Y_1),(X_2,Y_2),\ldots$ are regression data such that $Y_i$ given $X_i$ has conditional density $f_{\theta_0}(y \given x)$ for all $i$, where $f_{\theta_0}$ belongs to some family $\{ f_{\theta} \colon \theta \in \Theta\}$ of conditional densities, and the true value $\theta_0$ is an inner point of $\Theta \subset \mathbb{R}^p$. Define $\ell_n(\theta) = \sum_{i=1}^n \log f_{\theta}(Y_i \given X_i)$, which we refer to as the log-likelihood; and the log-likelihood ratio process $A_n(h) = \ell_n(\theta_0 + h/\sqrt{n}) - \ell_n(\theta_0)$. Introduce the conditional scores $u_{\theta}(y \given x) = \partial \log f_{\theta}(y \given x)/\partial \theta$, and write $U_{n,j} = n^{-1/2}\sum_{i=1}^j u_{\theta_0}(Y_i \given X_i)$ for $j = 1,\ldots,n$, for the normalised score function evaluated in $\theta_0$. Let $\mu$ be the measure with respect to which $y \mapsto f_{\theta}(y \given x)$ is a conditional density. A key quantity in what follows is 
\begin{equation}
D_{\theta,h}(x) = \int \left\{\sqrt{f_{\theta + h}(y \given x)} - \sqrt{f_{\theta}(y \given x)} 
- \half h^\top u_{\theta}(y \given x) \sqrt{f_{\theta}(y \given x)} \right\}^2\,\mathrm{d} \mu(y).
\notag
\end{equation}
A family $\{f_{\theta}(\cdot \given \cdot) \colon \theta \in \Theta\}$ of conditional densities is {\it differentiable in quadratic mean} (DQM) at $\theta$ if
\begin{equation}
\E\, D_{\theta,h}(X)  = o(\vbb{h}^2), \quad \text{as $h \to 0$},
\label{eq::dqm}
\end{equation}
This is the classical DQM condition, see for example \citet[Eq.~(7.1), p.~93]{vandervaart1998asymptotic}, presented in a manner that suits our purposes. 
The DQM condition in~\eqref{eq::dqm} well suited for i.i.d.~data, but is not strong enough to handle adaptive designs, nor more generally situations where the the distribution of $X_i$ changes with $i$. This motivates introducing the notion of a model being what we choose to call {\it summable differentiability in quadratic mean}, or S-DQM for short. 
\begin{definition}{\sc(S-DQM)}. Let $(X_1,Y_1),(X_2,Y_2)\ldots$ be such that for each $i$, $Y_i$ given $X_i$ has conditional density $f_{\theta}(y \given x)$ for some $\theta \in \Theta$. The family $\{f_{\theta}(\cdot \given \cdot) \colon \theta \in \Theta\}$ is \emph{summable differentiable in quadratic mean} at $\theta$ if $\sum_{i=1}^n \E\,D_{\theta,h/\sqrt{n}}(X_i ) = o(1)$ as $n$ tends to infinity.
\end{definition}
The S-DQM property has been studied for independent data, albeit not with that name, see, e.g., \citet[Proposition~A.8, p.~182]{vandervaart1988large} or \citet[Corollary~74.4, p.~382]{strasser1985mathematical}. Even though S-DQM refers to a model or family $\{f_{\theta}(\cdot\given \cdot) \colon \theta \in \Theta\}$ of conditional densities and a given sequence $(X_1,Y_1),(X_2,Y_2)\ldots$ of regression data, in the following we simply say that $f_{\theta}(y \given x)$ is S-DQM. Notice that if the data $(X_1,Y_1),(X_2,Y_2)\ldots$ are i.i.d., then $\sum_{i=1}^n \E\,D_{\theta,h/\sqrt{n}}(X_i ) = n \E\,D_{\theta,h/\sqrt{n}}(X_1 )$, and S-DQM is equivalent to DQM. Also, since the random variables $D_{\theta,h/\sqrt{n}}(X_1),D_{\theta,h\sqrt{n}}(X_2),\ldots$ are non-negative, S-DQM implies that $\sum_{i=1}^n D_{\theta,h/\sqrt{n}}(X_i) = o_p(1)$. It is this convergence in probability to zero that will be used in the proof of Proposition~\ref{proposition:Anh_approxrep} below. Before we turn to that proposition we must introduce the process  
\begin{equation}
\langle U_{n,\cdot},U_{n,\cdot} \rangle_j = \frac1n\sum_{i=1}^j \E_{\theta_0}[u_{\theta_0}(Y_i \given X_i)u_{\theta_0}(Y_i \given X_i)^\top\given \mathcal{F}_{i-1}],\quad j = 1,\ldots,n.
\label{eq:qv}
\end{equation}
Here, $(\mathcal{F}_i)_{i \geq 1}$ is the filtration defined by $\mathcal{F}_{i} = \sigma((X_j,Y_j, U_j)_{1 \leq j \leq i})$. As the notation suggests, the process $\langle U_{n,\cdot},U_{n,\cdot} \rangle_j$ will be the predictable quadratic variation of $U_{n,j}$, but we have not yet established that $U_{n,j}$ is a square integrable martingale. The fact that it is follows from the conditions of the upcoming proposition. This proposition extends a well known `DQM implies local asymptotic normality' result for i.i.d.~data (see for example \citet[Theorem~7.2, p.~94]{vandervaart1998asymptotic} or \citet[Theorem~2, p.~577]{lecam1986}, to dependent data of the type we are studying in this paper.  
\begin{proposition}\label{proposition:Anh_approxrep} Suppose that the conditional density $f_{\theta}(y \given x)$ is S-DQM at $\theta_0$; that $i \mapsto \E_{\theta_0}\,\vbb{u_{\theta_0}(Y_i \given X_i)}^4$ is uniformly bounded; and that $\langle U_{n,\cdot},U_{n,\cdot}\rangle_n$ is bounded in probability. Then $(U_{n,j})_{1\leq j \leq, n \geq 1}$ is an array of square integrable martingales, and $A_n(h) = h^\top U_{n,n} - \half h^\top\langle U_{n,\cdot},U_{n,\cdot} \rangle_n h + o_p(1)$ for each $h$, as $n$ tends to infinity.
\end{proposition}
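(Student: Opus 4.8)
The plan is to adapt the classical Le Cam--van der Vaart route from ``DQM implies LAN'' to the present martingale setting, replacing the i.i.d.\ limit theorems used there by martingale counterparts on the filtration $(\mathcal{F}_i)$. Fix $h$ and write $g_{ni} = \sqrt{f_{\theta_0 + h/\sqrt n}(Y_i \given X_i) / f_{\theta_0}(Y_i \given X_i)}$, with the ratio read as $0$ where the denominator vanishes, so that $A_n(h) = 2\sum_{i=1}^n \log g_{ni}$ on the event that each $f_{\theta_0 + h/\sqrt n}(Y_i \given X_i)$ is positive; put $W_{ni} = 2(g_{ni}-1)$. Since $X_i$ is $\mathcal{F}_{i-1}$-measurable and $Y_i$ is conditionally independent of $\mathcal{F}_{i-1}$ given $X_i$ with conditional density $f_{\theta_0}(\cdot\given X_i)$, dividing the quadratic-mean defect in $D_{\theta,h}$ by $\sqrt{f_{\theta_0}(Y_i\given X_i)}$ yields $W_{ni} = n^{-1/2} h^\tr u_{\theta_0}(Y_i \given X_i) + 2\tilde R_{ni}$, where $\E[\tilde R_{ni}^2 \given \mathcal{F}_{i-1}] = D_{\theta_0, h/\sqrt n}(X_i)$; S-DQM with Markov's inequality then gives $\sum_{i=1}^n D_{\theta_0, h/\sqrt n}(X_i) = o_p(1)$. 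The first assertion is now immediate: quadratic-mean differentiability forces $\int u_{\theta_0}(y \given x) f_{\theta_0}(y\given x)\,\mathrm{d}\mu(y) = 0$, hence $\E[u_{\theta_0}(Y_i\given X_i)\given\mathcal{F}_{i-1}] = 0$, while the uniform fourth-moment bound gives square integrability, so each $(U_{n,j})_{1\le j\le n}$ is a square-integrable $(\mathcal{F}_j)$-martingale, and a one-line computation identifies \eqref{eq:qv} as its predictable quadratic variation.

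The crux is three limits in probability. \emph{(i)} $\sum_{i=1}^n W_{ni}^2 = h^\tr\langle U_{n,\cdot},U_{n,\cdot}\rangle_n h + o_p(1)$: expanding the square leaves $n^{-1}\sum_i(h^\tr u_{\theta_0}(Y_i\given X_i))^2$, which differs from $h^\tr\langle U_{n,\cdot},U_{n,\cdot}\rangle_n h$ by a martingale sum of $L^2$-norm $O(n^{-1/2})$ (by the fourth-moment bound); a cross term bounded by $4\{n^{-1}\sum_i(h^\tr u_{\theta_0}(Y_i\given X_i))^2\}^{1/2}\{\sum_i\tilde R_{ni}^2\}^{1/2} = O_p(1)\,o_p(1) = o_p(1)$; and $4\sum_i\tilde R_{ni}^2 = o_p(1)$. \emph{(ii)} $\max_{1\le i\le n}|W_{ni}| = o_p(1)$: the $\tilde R_{ni}$ part is at most $(\sum_i\tilde R_{ni}^2)^{1/2} = o_p(1)$, while $\prob(\max_i n^{-1}(h^\tr u_{\theta_0}(Y_i\given X_i))^2 > \eps) \le \eps^{-2}n^{-2}\sum_i\E(h^\tr u_{\theta_0}(Y_i\given X_i))^4 = O(n^{-1})$ by a union bound. \emph{(iii)} $\sum_{i=1}^n W_{ni} = h^\tr U_{n,n} - \quart h^\tr\langle U_{n,\cdot},U_{n,\cdot}\rangle_n h + o_p(1)$: writing $\sum_i W_{ni} = h^\tr U_{n,n} + 2\sum_i\tilde R_{ni}$, split $2\sum_i\tilde R_{ni}$ into the martingale $2\sum_i\{\tilde R_{ni} - \E[\tilde R_{ni}\given\mathcal{F}_{i-1}]\}$, whose predictable quadratic variation is $\le 4\sum_i D_{\theta_0,h/\sqrt n}(X_i) = o_p(1)$ so that it is itself $o_p(1)$, plus the compensator $2\sum_i\E[\tilde R_{ni}\given\mathcal{F}_{i-1}]$. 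The Hellinger-affinity identity $\E[g_{ni}\given\mathcal{F}_{i-1}] = 1 - \half\eta_{ni}$, with $\eta_{ni} = \int\{\sqrt{f_{\theta_0+h/\sqrt n}(y\given X_i)} - \sqrt{f_{\theta_0}(y\given X_i)}\}^2\,\mathrm{d}\mu(y)$ the squared Hellinger distance, together with the mean-zero property of the score, gives $\E[\tilde R_{ni}\given\mathcal{F}_{i-1}] = -\half\eta_{ni}$; and expanding $\eta_{ni}$ through the definition of $D_{\theta,h}$ gives $\sum_i\eta_{ni} = \quart h^\tr\langle U_{n,\cdot},U_{n,\cdot}\rangle_n h + o_p(1)$, the remainder controlled via $\sum_i D_{\theta_0,h/\sqrt n}(X_i) = o_p(1)$ and Cauchy--Schwarz on the cross term.

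With these in place the proof closes. On the event $\{\max_i|W_{ni}|\le 1\}$, which has probability tending to $1$ by (ii), every $g_{ni}\ge\half > 0$, and the elementary bound $|2\log(1+w/2) - w + \quart w^2| \le C|w|^3$ for $|w|\le 1$ gives $A_n(h) = \sum_i W_{ni} - \quart\sum_i W_{ni}^2 + \sum_i\rho_{ni}$ with $|\sum_i\rho_{ni}| \le C\max_i|W_{ni}|\sum_i W_{ni}^2 = o_p(1)$ by (i) and (ii). Substituting (i) and (iii) and combining the two quarter-terms yields $A_n(h) = h^\tr U_{n,n} - \half h^\tr\langle U_{n,\cdot},U_{n,\cdot}\rangle_n h + o_p(1)$, as claimed.

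The step I expect to be the main obstacle is (iii). In the i.i.d.\ argument the analogous quantity succumbs to a plain weak law of large numbers, but here $\sum_i\tilde R_{ni}$ is not a sum of martingale increments, so one must pass through its Doob--Meyer compensator; the bookkeeping that turns that compensator into $-\quart h^\tr\langle U_{n,\cdot},U_{n,\cdot}\rangle_n h$ --- supplying exactly the extra quarter that promotes the coefficient $-\quart$ arising from $\sum_i W_{ni}^2$ to the final $-\half$ --- is precisely where the \emph{summability} in S-DQM, rather than a pointwise DQM statement, is used essentially: both the compensator's quadratic-variation bound and the Cauchy--Schwarz control of the cross term in $\sum_i\eta_{ni}$ rest on $\sum_i D_{\theta_0,h/\sqrt n}(X_i) = o_p(1)$ and not on $\E\,D_{\theta_0,h}(X) = o(\vbb{h}^2)$. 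A subsidiary technical point is making sure the mean-zero identity for the conditional score is genuinely in force, which it is, as a standard consequence of quadratic-mean differentiability.
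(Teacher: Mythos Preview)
Your proof is correct and follows essentially the same Le Cam--van der Vaart route as the paper: the decomposition $W_{ni}=n^{-1/2}h^\top u_{\theta_0}(Y_i\mid X_i)+2\tilde R_{ni}$ is equivalent to the paper's $\xi_{n,i},\zeta_{n,i},\eta_{n,i}$ bookkeeping, and your treatment of the compensator in (iii) via the Hellinger identity and Cauchy--Schwarz mirrors the paper's computation of $\sum_i\E_\theta[W_{n,i}\mid\mathcal F_{i-1}]$ exactly. The only cosmetic differences are that you use direct $L^2$/Markov bounds (exploiting that S-DQM controls $\E\sum_i D_{\theta_0,h/\sqrt n}(X_i)$, not just its probability limit) where the paper invokes Lenglart's inequality, and a fourth-moment union bound for $\max_i|W_{ni}|$ where the paper routes through a Lindeberg condition and Helland's lemma; both choices are mild simplifications and do not change the architecture of the argument.
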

\begin{proof} See the Appendix.
\end{proof}
A couple of remarks are in place. First, that this theorem requires S-DQM, and not merely DQM, is the price we pay for allowing for dependence in the regression data. As mentioned above, S-DQM reduces to DQM when the data are i.i.d. The Lyapunov type condition on the fourth moment of the scores ensures that the difference $n^{-1}\sum_{i=1}^n u_{\theta_0}(Y_i \mid X_i)u_{\theta_0}(Y_i \mid X_i)^{\tr}-\langle U_{n,\cdot},U_{n,\cdot}\rangle_n$ converges in probability to zero. The need for a fourth moment here stems from the fact that we use Lenglart{'}s inequality to prove this convergence (see e.g.~\citet[Theorem~VII.3.4, p.~496]{shiryaev1996probability}). Contrast this with the i.i.d.~case, where the difference above equals $n^{-1}\sum_{i=1}^n u_{\theta_0}(Y_i \mid X_i)u_{\theta_0}(Y_i \mid X_i)^{\tr}- \E_{\theta_0}\,u_{\theta_0}(Y_1 \given X_1)u_{\theta_0}(Y_1 \given X_1)^{\tr} $, which is $o_p(1)$ by the law of large numbers provided the Fisher information $\E_{\theta_0}\,u_{\theta_0}(Y_1 \given X_1)u_{\theta_0}(Y_1 \given X_1)^{\tr}$ exists. That is, in the i.i.d.~case, only the second moment of the score needs to exist, thus the fourth moment condition also seems to be a price we pay for the dependence. The condition we impose on $\langle U_{n,\cdot},U_{n,\cdot}\rangle_n$ being $O_p(1)$ reduces to existence of the Fisher information when the data are i.i.d. In the i.i.d.~case, the existence of the Fisher information is a consequence of DQM, but a similar implication does not seems to hold for S-DQM. This discussion illustrates that the conditions impose in Proposition~\ref{proposition:Anh_approxrep} are natural extensions of the conditions imposed in the i.i.d.~version of this proposition (see the references cited before the proposition). 

\subsection{Techniques for establishing S-DQM}
We now turn to the problem of establishing that a model is S-DQM, and demonstrate a few techniques for doing so. First, introduce the matrix valued function
\begin{equation}
J_{\theta}(x) = \int u_{\theta}(y \given x)u_{\theta}(y \given x)^{\top} f_{\theta}(y \given x) \,\dd\mu(y). 
\label{eq:jtheta}
\end{equation}
The predictable quadratic variation can then be written $\langle U_{n,\cdot},U_{n,\cdot}\rangle_j = n^{-1}\sum_{i=1}^j J_{\theta_0}(X_i)$. Let $X$ be a covariate with distribution $P_{\theta}^X$. A classical lemma (see e.g.,~\citet[Lemma~7.6, p.~95]{vandervaart1998asymptotic}), with notation adapted to our setting, says that if $\theta \mapsto \sqrt{f_{\theta}(y \given x)}$ is continuously differentiable for all $(x,y)$ and $\int J_{\theta}(x) \,\dd P_{\theta}^X(x)$ is finite and continuous in $\theta$, then $\E_{\theta}\, D_{\theta,h/\sqrt{n}}(X) = o(1/n)$ for each $h$ as $n \to \infty$. Now, let $P_{\theta}^{X_1},P_{\theta}^{X_2},\ldots$ be the marginal distributions of the random variables $X_1,X_2,\ldots$. Then a crude bound on the expectation of the sum $\sum_{i=1}^n D_{h/\sqrt{n}}(X_i)$ is 
\begin{equation}
\E_{\theta}\,\sum_{i=1}^n D_{\theta,h/\sqrt{n}}(X_i)
\leq n \max_{i \leq n}\int D_{\theta,h/\sqrt{n}}(x)\,\dd P_{\theta}^{X_i}(x).
    \notag
\end{equation}
Using Lemma~7.6 from \citet{vandervaart1998asymptotic}, we might establish that $\int D_{\theta,h/\sqrt{n}}(x)\,\dd P_{\theta}^{X_i}(x) = o(1/n)$ for each $i$, but this is not enough to establish that neither the right nor the left hand side of the above display is $o(1)$. Assume, however, that that all the marginal distributions are dominated by a $\sigma$-finite measure $\nu$, and that that the densities $\dd P_{\theta}^{X_i}/\dd \nu$ are bounded by a function $g$, then $n \max_{i \leq n}\int D_{\theta,h/\sqrt{n}}(x)\,\dd P_{\theta}^{X_i}(x) \leq n \int D_{\theta,h/\sqrt{n}}(x) g(x)\,\dd\nu(x)$, and by the classical lemma $n \int D_{\theta,h/\sqrt{n}}(x) g(x)\,\dd\nu(x) = o(1)$, provided $\theta \mapsto \sqrt{f_{\theta}(y \given x)}$ is continuously differentiable for all $(x,y)$ and $\int J_{\theta}(x) g(x) \,\dd\nu(x)$ is finite and continuous in $\theta$. We summarise this discussion in a lemma, the complete proof of which can be found in the Appendix.

\begin{lemma}\label{lemma:lemma7.6ext2}Assume that there is a function $g$ and a $\sigma$-finite measure $\nu$ such that {\rm(i)} $ \theta \mapsto \sqrt{f_{\theta}(y \given x)}$ is continuously differentiable for all $(x,y)$; {\rm(ii)} $\theta \mapsto \int J_{\theta}(x) g(x)\,\dd\nu(x)$ is continuous in a neighbourhood of $\theta_0$; {\rm(iii)} $P_{\theta_0}^{X_{i}} \ll \nu$ for $i = 1,2,\ldots$; and {\rm(iv)} the densities $\dd P_{\theta_0}^{X_i}/\dd\nu,\, i = 1,2,\ldots$ are dominated by $g$. Then the model $\{f_{\theta}( \cdot \given \cdot) \colon \theta \in \Theta\}$ is S-DQM at $\theta_0$.
\end{lemma}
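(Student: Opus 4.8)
The plan is to carry out the reduction sketched just before the statement and then to supply the ``classical lemma'' step under assumptions~(i)--(ii) alone, taking care that the measure $g\,\dd\nu$, which here plays the role of the covariate distribution, need not be a probability measure. Since $D_{\theta_0,h/\sqrt n}(x)\ge 0$ and, by~(iii)--(iv), $P_{\theta_0}^{X_i}\ll\nu$ with $\dd P_{\theta_0}^{X_i}/\dd\nu\le g$, we get $\E_{\theta_0}D_{\theta_0,h/\sqrt n}(X_i)=\int D_{\theta_0,h/\sqrt n}(x)\,\dd P_{\theta_0}^{X_i}(x)\le\int D_{\theta_0,h/\sqrt n}(x)\,g(x)\,\dd\nu(x)$ for every $i\le n$, and hence $\sum_{i=1}^n\E_{\theta_0}D_{\theta_0,h/\sqrt n}(X_i)\le n\int D_{\theta_0,h/\sqrt n}(x)\,g(x)\,\dd\nu(x)$. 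It therefore suffices to prove that, for each fixed $h$,
\[
n\int D_{\theta_0,h/\sqrt n}(x)\,g(x)\,\dd\nu(x)\longrightarrow 0,\qquad n\to\infty.
\]
This is exactly the conclusion of Lemma~7.6 in \citet{vandervaart1998asymptotic} with $g\,\dd\nu$ in place of the law of $X$; since $g\,\dd\nu$ is merely $\sigma$-finite I would reprove it rather than quote it, using only~(i) and~(ii).

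Write $g_\theta(y\given x)=\sqrt{f_\theta(y\given x)}$. By~(i) this is continuously differentiable in $\theta$, with derivative $\dot g_\theta(y\given x)=\half u_\theta(y\given x)g_\theta(y\given x)$ (the right-hand side being $0$ wherever $f_\theta(y\given x)=0$), so that $\int\{h^\top\dot g_\theta(y\given x)\}^2\,\dd\mu(y)=\tfrac14 h^\top J_\theta(x)h$. For $n$ large enough that the segment $\{\theta_0+th/\sqrt n:t\in[0,1]\}$ lies in $\Theta$, the fundamental theorem of calculus gives
\[
r_n(y\given x):=\sqrt n\bigl\{g_{\theta_0+h/\sqrt n}(y\given x)-g_{\theta_0}(y\given x)\bigr\}=\int_0^1 h^\top\dot g_{\theta_0+th/\sqrt n}(y\given x)\,\dd t,
\]
and hence $nD_{\theta_0,h/\sqrt n}(x)=\int\{r_n(y\given x)-h^\top\dot g_{\theta_0}(y\given x)\}^2\,\dd\mu(y)$. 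Two facts follow. First, by continuity of $\theta\mapsto\dot g_\theta(y\given x)$, $r_n(y\given x)\to h^\top\dot g_{\theta_0}(y\given x)$ for every $(x,y)$. Second, by the Cauchy--Schwarz inequality $r_n(y\given x)^2\le\int_0^1\{h^\top\dot g_{\theta_0+th/\sqrt n}(y\given x)\}^2\,\dd t$, and integrating over $y$ gives $\int r_n(y\given x)^2\,\dd\mu(y)\le\tfrac14\int_0^1 h^\top J_{\theta_0+th/\sqrt n}(x)h\,\dd t$.

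Now integrate the last bound against $g\,\dd\nu$ and apply Fubini ($\mu$ and $\nu$ being $\sigma$-finite): $\int\!\int r_n(y\given x)^2\,\dd\mu(y)\,g(x)\,\dd\nu(x)\le\tfrac14\int_0^1 h^\top\bigl(\int J_{\theta_0+th/\sqrt n}(x)g(x)\,\dd\nu(x)\bigr)h\,\dd t$. By~(ii) the inner integral is finite and continuous near $\theta_0$, hence locally bounded, so dominated convergence in $t$ gives $\limsup_n\int\!\int r_n^2\,\dd\mu\,g\,\dd\nu\le\int\!\int\{h^\top\dot g_{\theta_0}\}^2\,\dd\mu\,g\,\dd\nu$; Fatou's lemma together with the pointwise convergence $r_n\to h^\top\dot g_{\theta_0}$ gives the reverse bound for the $\liminf$, so the $L^2(\dd\mu\otimes g\,\dd\nu)$-norms of the $r_n$ converge to that of $h^\top\dot g_{\theta_0}$. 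Pointwise convergence plus convergence of norms upgrades to convergence in $L^2$, for instance by applying dominated convergence to $\{r_n-h^\top\dot g_{\theta_0}\}^2$ with dominating sequence $2r_n^2+2\{h^\top\dot g_{\theta_0}\}^2$, whose integrals tend to $4\int\!\int\{h^\top\dot g_{\theta_0}\}^2\,\dd\mu\,g\,\dd\nu<\infty$. Thus $n\int D_{\theta_0,h/\sqrt n}(x)g(x)\,\dd\nu(x)=\int\!\int\{r_n-h^\top\dot g_{\theta_0}\}^2\,\dd\mu\,g\,\dd\nu\to 0$, and S-DQM at $\theta_0$ follows from the first paragraph.

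The work is in the bookkeeping with limits rather than in any single hard step: checking that Fubini is legitimate when $g\,\dd\nu$ may have infinite total mass; justifying dominated convergence in the $t$-integral, for which the local boundedness of $\theta\mapsto\int J_\theta(x)g(x)\,\dd\nu(x)$ coming from~(ii) is exactly what is needed; and the Radon--Riesz-type passage from pointwise-plus-norm convergence to $L^2$ convergence. One should also double-check that the classical proof behind Lemma~7.6 never invokes finiteness of the total mass of the covariate measure, only finiteness of $\int J_\theta(x)g(x)\,\dd\nu(x)$.
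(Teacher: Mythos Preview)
Your proposal is correct and follows essentially the same route as the paper's proof: both reduce to showing $n\int D_{\theta_0,h/\sqrt n}(x)\,g(x)\,\dd\nu(x)\to 0$ via conditions~(iii)--(iv), then use the fundamental theorem of calculus and Jensen/Cauchy--Schwarz to bound the difference quotients, Tonelli to swap integrals, and continuity of $\theta\mapsto\int J_\theta(x)g(x)\,\dd\nu(x)$ to control the limit. The only cosmetic difference is that where the paper invokes \citet[Proposition~2.29]{vandervaart1998asymptotic}, you spell out the ``pointwise convergence plus norm convergence implies $L^2$ convergence'' step via the generalised dominated convergence theorem; this is exactly what that proposition encodes, so the arguments are the same in substance.
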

We remark that if the data $(X_1,Y_1),(X_2,Y_2),\ldots$ are i.i.d., in particular $P_{\theta}^{X_i} = P_{\theta}^{X_j}$ for all $i$ and $j$, then conditions (iii) and (iv) of Lemma~\ref{lemma:lemma7.6ext2} are redundant, and the conclusion of the lemma follows immediately from the classical lemma quoted above (and S-DQM is just DQM).

In addition to Lemma~\ref{lemma:lemma7.6ext2}, S-DQM can also be established for sufficiently smooth models on a compact domain, as the next lemma demonstrates.

\begin{lemma}\label{lemma:compact}
    Suppose that the variables $Y_i$ have finite support $\yspace = \{y_1, \dots, y_k\}$. Let $\xspace\subseteq\reals$ and $\Theta_0\subseteq\Theta$ be compact sets, and suppose that for each $y\in\yspace$, the map $(x, \theta) \mapsto  \sqrt{f_\theta(y\given x)}$ is continuous and has continuous first and second derivatives on $\xspace\times\Theta_0$. Then for any sequence $(X_i)$ of random variables with domain $\xspace$, the family $\{f_\theta(\cdot\given\cdot) : \theta\in\Theta_0\}$ is S-DQM at any point $\theta$ in the interior of $\Theta_0$.
\end{lemma}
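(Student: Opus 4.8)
The plan is to use the finiteness of $\yspace$ to write $D_{\theta,h}(x)$ as a finite sum of squared first-order Taylor remainders of $\theta \mapsto \sqrt{f_\theta(y\given x)}$, and then bound those remainders uniformly in $x$ by compactness. Write $s_\theta(y\given x) = \sqrt{f_\theta(y\given x)}$. Since $\mu$ is counting measure on $\yspace = \{y_1,\dots,y_k\}$, and since $\half\,u_\theta(y\given x)\sqrt{f_\theta(y\given x)} = \nabla_\theta s_\theta(y\given x)$ wherever $f_\theta(y\given x) > 0$ — following the usual convention in quadratic-mean theory, we read $u_\theta(y\given x)\sqrt{f_\theta(y\given x)}$ as $2\nabla_\theta s_\theta(y\given x)$, which is well-defined and vanishes where $f_\theta(y\given x) = 0$, as $s_\theta \ge 0$ attains a minimum there — one has
\[
D_{\theta,h}(x) = \sum_{j=1}^{k}\bigl( s_{\theta+h}(y_j\given x) - s_\theta(y_j\given x) - h^\tr\nabla_\theta s_\theta(y_j\given x)\bigr)^2,
\]
so that each summand is the square of the first-order Taylor remainder at $0$ of the map $h\mapsto s_{\theta+h}(y_j\given x)$.

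Next I would fix $\delta>0$ so small that the closed ball $\bar B_\delta := \{\theta'\colon\vbb{\theta'-\theta}\le\delta\}$ is contained in $\Theta_0$, which is possible since $\theta$ is interior to $\Theta_0$. For each of the finitely many $y_j$, the Hessian $(x,\theta')\mapsto\nabla_\theta^2 s_{\theta'}(y_j\given x)$ is continuous on the compact set $\xspace\times\bar B_\delta$, hence bounded there; set
\[
M := \max_{1\le j\le k}\ \sup_{x\in\xspace,\ \theta'\in\bar B_\delta}\ \bigl\lVert\nabla_\theta^2 s_{\theta'}(y_j\given x)\bigr\rVert < \infty
\]
with $\lVert\cdot\rVert$ the operator norm. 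For $\vbb{h}\le\delta$ the segment $\{\theta+th\colon t\in[0,1]\}$ lies in $\bar B_\delta$, so the integral form of Taylor's theorem gives $\bigl|s_{\theta+h}(y_j\given x)-s_\theta(y_j\given x)-h^\tr\nabla_\theta s_\theta(y_j\given x)\bigr|\le\half M\vbb{h}^2$ for every $x\in\xspace$, and hence
\[
D_{\theta,h}(x)\ \le\ k\bigl(\tfrac12 M\vbb{h}^2\bigr)^2\ =\ \tfrac{kM^2}{4}\,\vbb{h}^4,\qquad x\in\xspace,\ \vbb{h}\le\delta.
\]

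Finally I would substitute $h/\sqrt n$ for $h$: for any sequence $(X_i)$ taking values in $\xspace$ and any $n$ with $\vbb{h}/\sqrt n\le\delta$,
\[
\sum_{i=1}^n\E\,D_{\theta,h/\sqrt n}(X_i)\ \le\ \sum_{i=1}^n\frac{kM^2}{4}\cdot\frac{\vbb{h}^4}{n^2}\ =\ \frac{kM^2\vbb{h}^4}{4n}\ \longrightarrow\ 0,
\]
which is precisely S-DQM at $\theta$. I do not expect a genuine obstacle here: the argument is a uniform-in-$x$ refinement of the classical ``$C^2$ implies DQM'' lemma, and the one step requiring attention is the finiteness of $M$ — it is precisely here that all three hypotheses enter, since $\xspace$ compact and $\Theta_0$ compact make $\xspace\times\bar B_\delta$ compact, and $\yspace$ finite lets us take a maximum over $j$ rather than a supremum. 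It is also worth noting why Lemma~\ref{lemma:lemma7.6ext2} does not already cover this case: there the $X_i$ must admit densities dominated by a common $g$ relative to a $\sigma$-finite $\nu$, which fails if, say, the $X_i$ are supported on finitely many points of $\xspace$; compactness instead lets us bound $D_{\theta,h/\sqrt n}(x)$ uniformly over $x\in\xspace$ directly, bypassing any density assumption.
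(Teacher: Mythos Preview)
Your proof is correct and takes a genuinely more direct route than the paper's. Both arguments open the same way, writing $D_{\theta,h}(x)$ as a finite sum of squared first-order Taylor remainders of $\theta\mapsto s_\theta(y_j\given x)$, but then diverge. You bound the $\theta$-Hessian $\nabla_\theta^2 s_{\theta'}(y_j\given x)$ uniformly over the compact set $\xspace\times\bar B_\delta$ and apply Taylor's theorem with integral remainder to obtain $D_{\theta,h}(x)\le\tfrac{kM^2}{4}\vbb{h}^4$ uniformly in $x$, which immediately gives $\sum_{i=1}^n\E\,D_{\theta,h/\sqrt{n}}(X_i)\le\tfrac{kM^2\vbb{h}^4}{4n}$. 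The paper instead sets $g_{n,\theta}(x)=nD_{\theta,h/\sqrt{n}}(x)$, uses the mean-value theorem and continuity of $\nabla_\theta s$ to show the family $(g_{n,\theta})_n$ is uniformly bounded, then bounds $\partial_x g_{n,\theta}$ via the mixed derivative $\partial_x\nabla_\theta s$ to obtain equicontinuity in $x$, and finally invokes Arzel\`a--Ascoli together with a sub-subsequence argument to conclude $\sup_{x\in\xspace} g_{n,\theta}(x)\to 0$. Your approach is shorter, uses only the second $\theta$-derivatives (the $x$-derivatives play no role), and delivers an explicit $O(1/n)$ rate rather than merely $o(1)$; the paper's argument is more elaborate without any evident gain under the stated hypotheses.
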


\begin{proof}
    See the Appendix.
\end{proof}
In the case of binary regression, the $Y_i$ trivially have finite support, so this assumption is not too restrictive for our purposes. Also, for standard model choices for~\eqref{eq:binarymodel}, such as logistic or probit regression, the continuity assumptions in Lemma~\ref{lemma:compact} are satisfied provided $\beta > 0$. 

We conclude this section with an extension of Lemma~\ref{lemma:compact}, which will be useful for situations where the covariate sequence itself converges almost surely to a constant limit (like it does in the Robbins--Monro design). We state the result here, the proof of which can be found in the Appendix.

\begin{lemma}\label{lemma:conv-as}
    Suppose that the variables $Y_i$ have finite support $\yspace = \{y_1, \dots, y_k\}$, and that $X_i\to\gamma\in\xspace$ almost surely ($\xspace$ need not be compact). Let $\Theta_0\subseteq\Theta$ be a compact set, and suppose that for each $y\in\yspace$, the map $(x, \theta) \mapsto  \sqrt{f_\theta(y\given x)}$ is continuous and has continuous first and second derivatives on $\xspace\times\Theta_0$. Then for any point $\theta$ in the interior of $\Theta_0$, we have that $\sum_{i=1}^n D_{\theta, h/\sqrt{n}}(X_i) \to 0$ almost surely. In particular, $\sum_{i=1}^n D_{\theta, h/\sqrt{n}}(X_i) = o_p(1)$.
\end{lemma}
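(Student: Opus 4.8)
The plan is to mimic the proof of Lemma~\ref{lemma:compact}, using the almost sure convergence $X_i \to \gamma$ in place of compactness of $\xspace$: on the relevant almost sure event the covariates eventually lie in a (random) compact set, and this is essentially all that the earlier argument used. Concretely, I would show that for each realisation in this event the sum $\sum_{i=1}^n D_{\theta, h/\sqrt n}(X_i)$ breaks into a fixed finite number of terms that are each $O(n^{-2})$, plus a tail of order $O(1/n)$.

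First, the set-up. Since $\theta$ lies in the interior of $\Theta_0$, fix a closed ball $B \subseteq \Theta_0$ centred at $\theta$, and for $y_j \in \yspace$ and $(x,\vartheta) \in \xspace \times \Theta_0$ write $H_j(x,\vartheta)$ for the Hessian of $\vartheta \mapsto \sqrt{f_\vartheta(y_j \given x)}$, which by hypothesis is continuous on $\xspace \times \Theta_0$. Fix $\omega$ in the almost sure event $\{X_i(\omega) \to \gamma\}$; then there are a compact neighbourhood $K = K(\omega) \subseteq \xspace$ of $\gamma$ and an index $N = N(\omega)$ with $X_i(\omega) \in K$ for all $i \ge N$. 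Because $\mu$ is counting measure on $\yspace$,
\[
D_{\theta,h}(x) = \sum_{j=1}^k \Big(\sqrt{f_{\theta+h}(y_j \given x)} - \sqrt{f_{\theta}(y_j \given x)} - \half\, h^\top u_{\theta}(y_j \given x)\sqrt{f_{\theta}(y_j \given x)}\Big)^2 .
\]
Using $\nabla_\vartheta \sqrt{f_\vartheta(y \given x)}\big|_{\vartheta = \theta} = \half\, u_{\theta}(y \given x)\sqrt{f_{\theta}(y \given x)}$ and a second-order Taylor expansion with integral remainder of the $C^2$ map $\vartheta \mapsto \sqrt{f_\vartheta(y_j \given x)}$ — valid once $\|h\|$ is small enough that the segment from $\theta$ to $\theta + h$ stays in $\Theta_0$ — each summand equals $\big(\int_0^1 (1-t)\, h^\top H_j(x, \theta + th)\, h\, \dd t\big)^2$, hence
\[
D_{\theta,h}(x) \;\le\; \tfrac14\, \|h\|^4 \sum_{j=1}^k \Big(\sup_{t \in [0,1]} \lVert H_j(x, \theta + th)\rVert\Big)^2 .
\]

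It remains to bound the two pieces of the sum. For the tail, continuity of $H_j$ on the compact set $K \times B$ gives $M := \max_{j \le k}\sup_{(x,\vartheta) \in K \times B} \lVert H_j(x,\vartheta)\rVert < \infty$; choosing $n_0$ so that $\theta + t h/\sqrt n \in B$ for all $t \in [0,1]$ whenever $n \ge n_0$, the last display yields $D_{\theta, h/\sqrt n}(X_i(\omega)) \le \tfrac14\, k M^2 \|h\|^4 / n^2$ for all $i \ge N$ and $n \ge n_0$, so $\sum_{i=N}^n D_{\theta, h/\sqrt n}(X_i(\omega)) \le \tfrac14\, k M^2 \|h\|^4 / n \to 0$. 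For the finitely many indices $i < N$, the same bound applied at the single fixed point $x = X_i(\omega)$, with the supremum now over $\vartheta \in B$ only (finite since $H_j$ is continuous on the compact $\{X_i(\omega)\} \times B$), shows $D_{\theta, h/\sqrt n}(X_i(\omega)) = O(n^{-2})$, hence $\sum_{i=1}^{N-1} D_{\theta, h/\sqrt n}(X_i(\omega)) = O(N/n^2) \to 0$, with $N$ fixed once $\omega$ is fixed. Adding the two estimates gives $\sum_{i=1}^n D_{\theta, h/\sqrt n}(X_i(\omega)) \to 0$; as this holds for every $\omega$ in an almost sure event, the claimed almost sure convergence follows, and $\sum_{i=1}^n D_{\theta, h/\sqrt n}(X_i) = o_p(1)$ is immediate.

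The one point to handle carefully — and the closest thing to an obstacle — is that $N$ and $K$ depend on the realisation, so no bound uniform over the sample space is available; this is harmless, because almost sure convergence is verified realisation by realisation and, for each fixed $\omega$, only finitely many early covariates lie outside $K$, each contributing $O(n^{-2})$. A minor companion point is that the Taylor step requires the whole segment $\theta + t h/\sqrt n$, $t \in [0,1]$, to remain inside $\Theta_0$, where the second $\vartheta$-derivative exists; this is why one passes to $n \ge n_0$ and uses that $\theta$ is interior.
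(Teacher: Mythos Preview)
Your proof is correct. Both your argument and the paper's are pathwise: fix $\omega$ in the almost sure event $\{X_i\to\gamma\}$, observe that the realised sequence $(X_i(\omega))$ lies in a (random) compact set, and then control the sum on that set. The difference is in how the last step is executed. The paper simply notes that a convergent sequence is bounded, takes a compact $\xspace_\omega$ containing the whole sequence, and invokes Lemma~\ref{lemma:compact} (via a one-line corollary for deterministic sequences) as a black box; thus the Arzel\`a--Ascoli machinery of Lemma~\ref{lemma:compact} does the work. You instead bypass Lemma~\ref{lemma:compact} entirely and use a direct second-order Taylor expansion of $\vartheta\mapsto\sqrt{f_\vartheta(y\given x)}$ to obtain the explicit bound $D_{\theta,h/\sqrt n}(x)\le C/n^2$ with $C$ uniform on compact $x$-sets, which makes the rate $O(1/n)$ for the sum visible. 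Your approach is more self-contained and quantitative, and in fact uses only the $\vartheta$-Hessian rather than the mixed $(x,\vartheta)$-derivatives that Lemma~\ref{lemma:compact} needs for equicontinuity; the paper's route is shorter because it reuses earlier work. Incidentally, your head/tail split is not needed: since $(X_i(\omega))$ converges, the entire sequence already sits in a compact set, and a single uniform bound suffices, as the paper does.
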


Note that the conclusion of Lemma~\ref{lemma:conv-as} does not necessarily imply S-DQM. However, as pointed out in the previous section, it is the convergence of $\sum_{i=1}^n D_{\theta, h/\sqrt{n}}(X_i)$ to zero in probability which is needed in Proposition~\ref{proposition:Anh_approxrep}, so Lemma~\ref{lemma:conv-as} is still useful to us.

\subsection{From S-DQM to asymptotic normality}
The next theorem will be used in establishing asymptotic normality of the maximum likelihood estimators in the adaptive designs we study in this paper. All quantities appearing in the theorem are as defined in above.
\begin{theorem}\label{thm:master_theorem} Suppose that conditions of Proposition~\ref{proposition:Anh_approxrep} are in force. In addition, assume that $A_n(h)$ is concave for each $n$; and that $\langle U_{n,\cdot},U_{n,\cdot}\rangle_n$ converges in probability to an invertible matrix $J$. Then $\sqrt{n}(\widehat{\theta}_n - \theta_0)$ converges in distribution to mean zero normally distributed random variable with covariance $J^{-1}$.
\end{theorem}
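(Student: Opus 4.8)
\medskip

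\noindent\textbf{Proof proposal.}
The plan is to identify $\sqrt{n}(\widehat\theta_n - \theta_0)$ as the maximiser of the concave likelihood ratio process $h \mapsto A_n(h)$, to establish a limit for this process, and then to transfer the convergence from the process to its maximiser by a convexity argument. Write $Q_n(h) = h^\top U_{n,n} - \half\, h^\top \langle U_{n,\cdot},U_{n,\cdot}\rangle_n h$, so that Proposition~\ref{proposition:Anh_approxrep} reads $A_n(h) = Q_n(h) + o_p(1)$ for each fixed $h$. Since $\ell_n$ attains its maximum at $\widehat\theta_n$ and $A_n(h) = \ell_n(\theta_0 + h/\sqrt{n}) - \ell_n(\theta_0)$, the maximiser of $h \mapsto A_n(h)$ is $\sqrt{n}(\widehat\theta_n - \theta_0)$, and $A_n$ is concave by hypothesis (we extend it by $-\infty$ off its domain, which the asymptotics permit since the maximiser will be shown to localise near $0$). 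Note also that $\langle U_{n,\cdot},U_{n,\cdot}\rangle_n = n^{-1}\sum_{i=1}^n \E_{\theta_0}[u_{\theta_0}(Y_i\given X_i)u_{\theta_0}(Y_i\given X_i)^\top\given\F_{i-1}]$ is almost surely positive semidefinite, hence so is its limit $J$; being invertible, $J$ is in fact positive definite.

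First I would pin down the limit of the linear term. By Proposition~\ref{proposition:Anh_approxrep}, $(U_{n,j})_{1\le j\le n}$ is a square integrable martingale array with predictable quadratic variation $\langle U_{n,\cdot},U_{n,\cdot}\rangle_n$, which converges in probability to $J$. The uniform fourth-moment bound supplies the conditional Lindeberg condition: for any $\eps > 0$, the nonnegative sum $\sum_{i=1}^n \E_{\theta_0}\bigl[\vbb{n^{-1/2}u_{\theta_0}(Y_i\given X_i)}^2\one\{\vbb{n^{-1/2}u_{\theta_0}(Y_i\given X_i)} > \eps\}\,\big|\,\F_{i-1}\bigr]$ has expectation at most $\eps^{-2}n^{-2}\sum_{i=1}^n \E_{\theta_0}\vbb{u_{\theta_0}(Y_i\given X_i)}^4 = O(1/n)$, so it is $o_p(1)$. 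The martingale central limit theorem (see, e.g., \citet{shiryaev1996probability}) then gives $U_{n,n} \to_d \norm(0,J)$, whence $(U_{n,n},\langle U_{n,\cdot},U_{n,\cdot}\rangle_n) \to_d (W,J)$ with $W \sim \norm(0,J)$. Combining this with $A_n(h) = Q_n(h) + o_p(1)$ and Slutsky's lemma, the finite-dimensional distributions of $A_n$ converge to those of the strictly concave quadratic $\widehat Q(h) = h^\top W - \half\, h^\top J h$, whose unique maximiser is $J^{-1}W$, and $J^{-1}W \sim \norm(0, J^{-1})$ since $J$ is symmetric.

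It then remains to upgrade this to convergence of the maximisers. Here I would invoke the convexity lemma for argmax functionals of concave (equivalently, convex) processes, in the spirit of Hjort and Pollard (1993): if concave random functions $A_n$ converge, in the sense of finite-dimensional distributions, to a concave limit $\widehat Q$ with an almost surely unique maximiser, then $\argmax_h A_n(h) \to_d \argmax_h \widehat Q(h)$. Applying this with the limit identified above yields $\sqrt{n}(\widehat\theta_n - \theta_0) = \argmax_h A_n(h) \to_d J^{-1}W \sim \norm(0, J^{-1})$, which is the claim.

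The step I expect to be the genuine obstacle is precisely this last transfer, and in a careful write-up I would reproduce its two ingredients. First, the purely analytic fact that pointwise convergence of concave functions on $\reals^p$ to a finite concave limit is automatically uniform on compact sets (Rockafellar's convex analysis, Theorem~10.8); this is applied pathwise along almost surely convergent subsequences furnished by Skorokhod's representation for the finite-dimensional distributions. Second, the localisation of the maximiser: because $\widehat Q$ attains its maximum at the almost surely finite point $J^{-1}W$ and falls off at least quadratically, at a rate governed by $\lambda_{\min}(J) > 0$, uniform-on-compacts convergence forces the value of $A_n$ on any small sphere around $J^{-1}W$ to drop strictly below $A_n(J^{-1}W)$ for large $n$; by concavity $A_n$ then cannot be maximised outside that sphere, so its maximiser exists for large $n$ and lies within it. As a by-product this delivers the existence of $\widehat\theta_n$ for large $n$, so that no separate $\sqrt{n}$-consistency argument is required, and it makes rigorous the identification of $\argmax_h A_n(h)$ with $\sqrt{n}(\widehat\theta_n-\theta_0)$, the former being eventually an interior point of the domain.
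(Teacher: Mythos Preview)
Your proposal is correct and follows essentially the same route as the paper: obtain the LAN expansion from Proposition~\ref{proposition:Anh_approxrep}, establish $U_{n,n}\to_d \norm(0,J)$ via a martingale central limit theorem (the paper cites \citet{helland1982central} where you invoke the Lindeberg condition directly), and then transfer to the maximiser by the concavity argument of \citet{hjort1993asymptotics}. The only cosmetic difference is that the paper first replaces $\langle U_{n,\cdot},U_{n,\cdot}\rangle_n$ by $J$ in the expansion and then quotes the ``basic corollary'' of Hjort--Pollard to obtain $\sqrt{n}(\widehat\theta_n-\theta_0)=J^{-1}U_{n,n}+o_p(1)$, whereas you keep the random quadratic and spell out the argmax-convergence mechanics (Rockafellar plus localisation); these are two presentations of the same argument.
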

\begin{proof} From Proposition~\ref{proposition:Anh_approxrep} coupled with the convergence of $\langle U_{n,\cdot},U_{n,\cdot} \rangle_n$, we have that $A_n(h) = h^\top U_{n,n} - \half h^\top J h + o_p(1)$. By the concavity of $A_n(h)$, the basic corollary in \citet{hjort1993asymptotics} yields $\sqrt{n}(\widehat{\theta}_n - \theta_0) = J^{-1} U_{n,n} + o_p(1)$. The convergence in probability of $\langle U_{n,\cdot},U_{n,\cdot} \rangle_n$ and the Lyapunov condition ensures that $U_{n,n}$ converges in distribution to $\mathrm{N}(0,J)$ (see, for example, \citet[Theorem~3.3(a), p.~84]{helland1982central}). 
\end{proof}
Let us spend a bit of time on unpacking each of the conditions of Theorem \ref{thm:master_theorem} (and Proposition~\ref{proposition:Anh_approxrep}). Let $\theta = (\alpha,\beta)^{\tr}$, $z = (1,x)^{\tr}$ and $Z_i = (1,X_i)^{\tr}$. Regarding concavity, we note that in the binary regression setting, $h \mapsto A_n(h)$ is concave if $\theta \mapsto \log f_{\theta}(y \given x) = y \log H(z^{\top}\theta) + (1 - y)\log \{1 - H(z^{\top}\theta)\}$ is concave as the sum of concave functions is concave. Further, since affine functions preserve concavity, it suffices to check that $a \mapsto y \log H(a) + (1 - y)\log[1 - H(a)]$ is concave for $a$ ranging over the reals. This is indeed the case for most standard choices of $H$, such as probit or logistic sigmoid. To check the Lyapunov condition of Proposition~\ref{proposition:Anh_approxrep} for binary regression models, we note that
\begin{align*}
\E_{\theta}\, \vbb{Z_i}^4\left\{ \frac{\{Y_i - H(Z_i^{\top}\theta)\}H^{\prime}(Z_i^{\top}\theta)}{H(Z_i^{\top}\theta)\{1 - H(Z_i^{\top}\theta)\}} \right\}^4 
& = \E_\theta\left[\vbb{Z_i}^4\,\left\{\frac{H'(Z_i^\top\theta)}{H(Z_i^\top\theta)\{1 - H(Z_i^\top\theta)\}}\right\}^4\E_\theta\left[\{Y_i - H(Z_i^\top\theta)\}^4\given Z_i\right]\right] \\
& = \E_\theta\vbb{Z_i}^4 H'(Z_i^\top\theta)^4\left\{\frac1{H(Z_i^\top\theta)^3} + \frac1{\{1 - H(Z_i^\top\theta)\}^3}\right\} \\
& = \E_\theta\vbb{Z_i}^4 H'(Z_i^\top\theta)^4\frac{1 - 3H(Z_i^\top\theta) + 3H(Z_i^\top\theta)^2}{\left[H(Z_i^\top\theta)\{1 - H(Z_i^\top\theta)\}\right]^3} \\
& \leq 7\,\E_\theta\vbb{Z_i}^4\frac{H'(Z_i^\top\theta)^4}{\left[H(Z_i^\top\theta)\{1 - H(Z_i^\top\theta)\}\right]^3}.
    \notag
\end{align*}
For both the probit and the logistic model, the function $a \mapsto a^4 H^{\prime}(a)^4/\{H(a)(1 - H(a)\}^3$ is bounded, which from the above inequality entails that the Lyapunov condition is satisfied. Similarly, that $a \mapsto a^2 H^{\prime}(a)^2/\{H(a)(1 - H(a)\}$ is bounded, implies that $\max_{i\leq n}\E_{\theta}\,\{ u_{\theta}(Y_i \given X_i)u_{\theta}(Y_i \given X_i)^{\tr}\given \Falg_{i-1}\}$ is bounded in probability. This shows that two of three conditions of Lemma~\ref{proposition:Anh_approxrep} are satisfied for the probit and logistic models. To verify that these two models coupled with certain adaptive designs satisfy all conditions of Theorem~\ref{thm:master_theorem} it therefore remains to show that they are S-DQM and that their predictable quadratic variations converge in probability. For the former task, we employ Lemmas~\ref{lemma:lemma7.6ext2}--\ref{lemma:conv-as}. The latter task must be tackled on a case-by-case basis, and is typically achieved using stationarity or convergence properties of the covariate sequence $X_1, X_2, \dots$.

For the central limit theorem of Theorem~\ref{thm:master_theorem} to be of any use, we need to be able to estimate the limiting variance. For the binary regression models of primary interest in this paper the inverse of the plug-in estimators \begin{equation*}
    \widehat{J}_n = \frac1n\sum_{i=1}^nZ_i Z_i^\top\frac{H'( Z_i^{\top}\widehat{\theta}_n)^2}{H(Z_i^{\top}\widehat{\theta}_n)\{1 - H(Z_i^{\top}\widehat{\theta}_n)\}}
\end{equation*}
may be used to estimate the asymptotic variance. That $\widehat{J}_n$ converges to $J$ in probability for logistic and probit regression follows from consistency of $\widehat{\theta}_n$ for $\theta_0$, upon noting that $a\mapsto H'(a)^2/\{H(a)[1 - H(a)]\}$ is bounded and continuous in these cases.  

\section{Illustration: The Bruceton and Robbins--Monro designs}\label{sec:illustrations}
In this section, we apply Theorem~\ref{thm:master_theorem} to explicitly verify asymptotic normality of the maximum likelihood estimator for two classic adaptive designs in binary regression, namely the Bruceton and Robbins--Monro designs. Both of these are widely used in industrial applications for choosing the inputs $X_1, X_2, \dots$ in a sensitivity testing experiment of the form~\eqref{eq:binarymodel}.
\subsection{Bruceton}\label{sec:bruceton}
Developed by \citet{dixon1948method}, the Bruceton design (also known as the `up-and-down' design) is used by both NATO and the U.S.~Department of Defense for sensitivity testing \citepalias{stanag4487, stanag4488, stanag4489, mil-std-331D}. Due to its simplicity, it is widely employed in both civil and military sector \citep{baker2021overview, collet2021review, esposito2024review}, and also in fields like pharmacology and toxicology \citep{flournoy2002up}. After an initial binary trial at a pre-specified height $X_1 = x_1$, with an observation of $Y_1$, the Bruceton design sets
\begin{equation}\label{eq:bruceton}
    X_{i} = \begin{cases}
                X_{i-1} - d & \mathrm{if}\,\, Y_{i-1} = 1, \\
                X_{i-1} + d & \mathrm{if}\,\, Y_{i-1} = 0,
              \end{cases}
\end{equation}
for $i = 2,3,\ldots, n$, where $d>0$ is a user-defined parameter called the step size. That is, we go down one step if we observe a successful trial, and up one step otherwise. The binary outcomes are modelled as in~\eqref{eq:binarymodel}. Note that there are no noise variables $U_i$ present in the Bruceton design. 

In view of the discussion following Theorem \ref{thm:master_theorem}, it remains to show that S-DQM holds and that $J_n$ converges in order to assert asymptotic normality of the maximum likelihood estimator. By marginalising out the $Y_i$ in \eqref{eq:bruceton}, we see that $(X_i)$ is a homogeneous Markov chain on the discrete space $x_1 + d\integers$, with $\prob(X_1 = x_1) = 1$ and (infinite) transition matrix $P$ defined by 
$$P_{x,x'} = \prob(X_{i} = x' \given X_{i-1} = x) = \begin{cases}H(\theta^\top z) & \mathrm{if}\, x' = x - d, \\ 1 - H(\theta^\top z) & \mathrm{if}\, x' = x + d,\end{cases}$$ 
where we recall that $z = (1, x)^\top$. \citet{derman1957non} first proved that the covariate process $(X_i)$ forms an ergodic Markov chain, and its stationary distribution has been extensively studied \citep{tsutakawa1967random, durham1993convergence, durham1994random, flournoy2002up}. There do exist results asserting asymptotic normality of the maximum likelihood estimator when the Bruceton design is employed \citep{tsutakawa1967asymptotic, fahrmeir1987asymptotic, rosenberger1997asymptotic}, but these all truncate the state space of the covariate sequence to a finite set. Although we believe the techniques developed in these references are powerful enough to handle the original unbounded state space formulation, we nevertheless provide a direct proof here for the sake of completeness.

\begin{lemma}\label{lemma:bruceton-sdqm}
    When equipped with probit or logistic regression, the Bruceton design is S-DQM. 
\end{lemma}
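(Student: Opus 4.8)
The plan is to verify the four hypotheses of Lemma~\ref{lemma:lemma7.6ext2}, taking the dominating measure $\nu$ to be the counting measure on the lattice $x_1 + d\integers$ on which the Bruceton chain lives. Two of the conditions are essentially free. Condition (i) holds because for binary regression $f_\theta(1\given x) = H(\alpha + \beta x)$ and $f_\theta(0\given x) = 1 - H(\alpha + \beta x)$, and for $H$ the logistic sigmoid or the probit function these are smooth in $\theta=(\alpha,\beta)^\top$ and take values strictly inside $(0,1)$, so $\theta\mapsto\sqrt{f_\theta(y\given x)}$ is continuously differentiable for every $(x,y)$. Condition (iii), $P_{\theta_0}^{X_i}\ll\nu$, is automatic. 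What remains is to produce a single function $g$ on the lattice that dominates every marginal mass function $x\mapsto\prob_{\theta_0}(X_i = x)$, $i\geq 1$ (condition (iv)), and for which $\theta\mapsto\int J_\theta(x)g(x)\,\dd\nu(x) = \sum_{k\in\integers}J_\theta(x_1+kd)\,g(x_1+kd)$ is finite and continuous near $\theta_0$ (condition (ii)). Throughout I use that the true slope $\beta_0$ is positive, as it is in sensitivity-testing applications.

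The key step, and the one I expect to be the main obstacle, is a geometric tail bound on the time-$i$ marginals of the chain that is uniform in $i$; I would obtain it by a Foster--Lyapunov drift argument. Since $(X_i)$ is the Markov chain with $\prob_{\theta_0}(X_i = x-d\given X_{i-1}=x) = H(\alpha_0+\beta_0 x)$ and $\prob_{\theta_0}(X_i = x+d\given X_{i-1}=x) = 1 - H(\alpha_0+\beta_0 x)$, fix $\lambda > 1$ and take the test function $V(x_1+kd) = \lambda^{|k|}$. For $x = x_1 + kd$ with $k\geq 1$,
\begin{equation}
\E_{\theta_0}[V(X_i)\given X_{i-1} = x] = V(x)\bigl\{\lambda^{-1}H(\alpha_0+\beta_0 x) + \lambda\,(1 - H(\alpha_0+\beta_0 x))\bigr\},
\notag
\end{equation}
and because $\beta_0 > 0$ the bracketed factor tends to $\lambda^{-1} < 1$ as $k\to\infty$; the symmetric computation for $k\leq -1$, where $H(\alpha_0+\beta_0 x)\to 0$, produces a factor tending to $\lambda^{-1}$ as well. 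Hence there exist $\gamma\in(0,1)$ and an integer $K$, independent of $i$, with $\E_{\theta_0}[V(X_i)\given X_{i-1}]\leq\gamma V(X_{i-1})$ on $\{|X_{i-1}-x_1|\geq Kd\}$ and $\leq\lambda^{K}$ otherwise, so that $\E_{\theta_0}[V(X_i)\given X_{i-1}]\leq\gamma V(X_{i-1}) + \lambda^{K}$ always. Iterating from the deterministic start $X_1 = x_1$ gives $\sup_i\E_{\theta_0}\,V(X_i)\leq 1 + \lambda^{K}/(1-\gamma)=:C$, and Markov's inequality then delivers $\prob_{\theta_0}(X_i = x_1+kd)\leq C\lambda^{-|k|}$ for all $i$ and $k\in\integers$. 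Thus I may take $g(x_1+kd) = C\lambda^{-|k|}$, which settles conditions (iii)--(iv).

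For condition (ii), recall that for binary regression $J_\theta(x) = zz^\top H'(\alpha+\beta x)^2/[H(\alpha+\beta x)\{1-H(\alpha+\beta x)\}]$ with $z=(1,x)^\top$. Restricting $\theta$ to a compact neighbourhood of $\theta_0$ on which $\beta$ stays bounded away from $0$, one has $|\alpha+\beta(x_1+kd)|\geq c|x_1+kd|-c'$ for constants $c>0$, $c'$, and since $t\mapsto(1+t^2)H'(t)^2/[H(t)\{1-H(t)\}]$ is bounded for both the logistic and the probit $H$ (which follows from the boundedness facts recorded after Theorem~\ref{thm:master_theorem}), every entry of $J_\theta(x_1+kd)g(x_1+kd)$ is at most $C'\lambda^{-|k|}$ uniformly over $k$ and over $\theta$ in the neighbourhood. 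The dominating series $\sum_k C'\lambda^{-|k|}$ converges and each summand is continuous in $\theta$, so the series converges uniformly over the neighbourhood and $\theta\mapsto\sum_k J_\theta(x_1+kd)g(x_1+kd)$ is continuous near $\theta_0$, which is condition (ii). Lemma~\ref{lemma:lemma7.6ext2} then yields that the Bruceton design is S-DQM.

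As noted, the delicate point is keeping the drift constants $\gamma$ and $K$ --- hence $C$, hence $g$ --- independent of the time index $i$; this is exactly where the positivity of $\beta_0$ enters, and without it (for instance $\beta_0 = 0$, where the chain is a driftless random walk) no such uniform geometric envelope exists and the argument breaks down. Everything else is a direct check or a routine consequence of the boundedness properties of $H$ already invoked elsewhere in the paper.
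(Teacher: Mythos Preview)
Your argument is correct, but it takes a longer route than the paper's. The paper also invokes Lemma~\ref{lemma:lemma7.6ext2} with $\nu$ the counting measure on the lattice, but it chooses $g\equiv 1$. With that choice condition~(iv) is trivial, since any probability mass function is bounded by $1$, so the entire Foster--Lyapunov step you carry out to obtain a uniform geometric envelope for the marginals $\prob_{\theta_0}(X_i=\cdot)$ is unnecessary. All of the decay needed for condition~(ii) then comes from the model rather than the design: for logistic and probit regression the entries of $J_\theta(x)$ themselves decay exponentially in $|x|$, uniformly for $\theta$ in a compact neighbourhood of $\theta_0$ with $\beta$ bounded away from zero, so the Weierstrass $M$-test gives continuity of $\sum_k J_\theta(x_1+kd)$ directly. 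You effectively split the required decay between $g$ (via the drift bound) and the boundedness of $J_\theta$, whereas the paper puts it all on $J_\theta$. Your approach would be the right one if $J_\theta$ were merely bounded rather than rapidly decaying; here it is extra work, though it has the side benefit of delivering geometric tails for the Bruceton marginals, which is a stronger statement about the chain than the paper needs or proves at this point.
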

\begin{proof}
    To show S-DQM, we use Lemma~\ref{lemma:lemma7.6ext2}. Let $\nu$ be the counting measure on the integers and $g$ being the constant function 1. Then (i) and (iv) are trivially satisfied. Next, (ii) holds by the Weierstrass M-test and the fact that $J_\theta(x)$ decays exponentially quickly as $x\to\pm\infty$, and (iii) is satisfied by discreteness of the $X_i$. 
\end{proof}
Having shown S-DQM, we need to address the convergence of $J_n$. We do this using standard Markov chain theory. 

\begin{lemma}\label{lemma::mc}
In the Bruceton design, the chain $(X_i)$ is irreducible and positive recurrent.
\end{lemma}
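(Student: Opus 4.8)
\emph{Plan.} The plan is to recognise the covariate process as a nearest-neighbour random walk (a birth--death chain) on the discrete lattice $x_1 + d\integers$ and to verify irreducibility and positive recurrence by elementary means. Write a generic state as $x = x_1 + kd$, put $z = (1,x)^{\tr}$, and set $p(x) = 1 - H(\theta^{\tr}z)$ and $q(x) = H(\theta^{\tr}z)$, so that from $x$ the chain jumps to $x+d$ with probability $p(x)$ and to $x-d$ with probability $q(x)$, with $p(x) + q(x) = 1$ at every state.

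\emph{Irreducibility.} For both the logistic and the probit link, $0 < H(\theta^{\tr}z) < 1$ for every $x$, hence $p(x) > 0$ and $q(x) > 0$ at every state. Thus each state communicates with its two lattice neighbours, and by transitivity the chain is irreducible on all of $x_1 + d\integers$.

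\emph{Positive recurrence.} Here I would apply a standard Foster--Lyapunov drift criterion with the test function $V(x) = |x|$, using crucially that $\beta > 0$ forces $H(\alpha + \beta x) \to 1$ as $x \to +\infty$ and $H(\alpha + \beta x) \to 0$ as $x \to -\infty$. Consequently, for $x$ large and positive, $\E[V(X_i) \given X_{i-1} = x] - V(x) = d\{p(x) - q(x)\} = d\{1 - 2H(\alpha + \beta x)\} \to -d$, and symmetrically this difference tends to $-d$ as $x \to -\infty$. Hence there exist a finite set $F \subset x_1 + d\integers$ and $\eps > 0$ with $\E[V(X_i) \given X_{i-1} = x] - V(x) \le -\eps$ for all $x \notin F$, while $\E[V(X_i) \given X_{i-1} = x] \le |x| + d < \infty$ at every state; Foster's criterion for an irreducible chain then yields positive recurrence. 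Alternatively, since a birth--death chain is reversible, one can exhibit the invariant measure explicitly through the detailed-balance relation $\pi(x)p(x) = \pi(x+d)q(x+d)$, which determines $\pi$ up to a multiplicative constant; summability of $\pi$ follows because the ratio $p(x)/q(x+d) = \{1 - H(\alpha + \beta x)\}/H(\alpha + \beta(x+d))$ tends to zero geometrically (logistic link) or faster (probit link) as $x \to +\infty$, and its reciprocal behaves the same way as $x \to -\infty$, so $\pi$ is finite and can be normalised to a stationary distribution.

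\emph{Main obstacle.} There is no serious obstacle in this lemma. The only points requiring care are (i) making explicit the role of the sign assumption $\beta > 0$, which is precisely what supplies the restoring drift at both ends of the lattice, and (ii) the behaviour near the (bounded set of) states where $H(\theta^{\tr}z)$ is close to $\half$, at which the drift of $V$ changes sign; this is absorbed harmlessly into the finite exceptional set $F$.
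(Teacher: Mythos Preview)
Your proposal is correct and follows essentially the same route as the paper: irreducibility from $0<H<1$, then Foster's criterion with the Lyapunov function $V(x)=|x|$, using $\beta>0$ to get a uniformly negative drift outside a finite set. Your drift computation $\E[V(X_i)\mid X_{i-1}=x]-V(x)=d\{1-2H(\alpha+\beta x)\}\to -d$ for large positive $x$ (and the symmetric statement for large negative $x$) is in fact slightly cleaner than the paper's, which instead establishes the stronger but unnecessary fact $P_{x,x+1}V(x+1)\to 0$; your reversibility/detailed-balance alternative is a nice bonus not present in the paper.
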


\begin{proof}
Without loss of generality, we can assume that $X_0 = 0$ and $d=1$, so that the state space is $\integers$. Clearly $(X_i)$ is irreducible, as we can reach any state from any other in a finite number of steps with positive probability. To prove positive recurrence, we apply Foster's theorem \citep{foster1953stochastic}. To do so, we need to find a function $V:\integers\to \reals$, a finite set $F\subseteq \integers$ and $\varepsilon > 0$ such that
\begin{enumerate}
    \item $\displaystyle\sum_{x'\in \integers}P_{x,x'}V(x') < \infty$ for all $x\in F$,
    \item $\displaystyle\sum_{x'\in \integers}P_{x,x'}V(x') \leq V(x) - \varepsilon$ for all $x\in \integers\setminus F$.
\end{enumerate}
Let $V(x) = |x|$. Then for both probit and logistic regression we have that $\lim_{x\to\infty} P_{x,x+1} V(x) = 0$. Indeed, for the former, L'H\^{o}pital's rule yields
    $$\lim_{x\to\infty}P_{x,x+1}V(x+1) = \lim_{x\to\infty}[1-\Phi(\alpha + \beta x)](x+1) \\ = \frac{\beta}{\sqrt{2\pi}}\lim_{x\to\infty}\exp[-(\alpha + \beta x)^2/2](x+1)^2 = 0.$$
Similarly, in logistic regression we have
$$\lim_{x\to\infty} P_{x, x+1} V(x+1) = \lim_{x\to\infty}\left\{1- \sigma(\alpha + \beta x)\right\}(x + 1) = \lim_{x\to\infty}\frac{e^x(x+1)^2}{(1 + e^x)^2} = 0.$$
Hence, in both cases, there exists $\varepsilon > 0$ and a positive integer $n_1>0$ such that $P_{x, x+1}V(x+1) \leq 1-\varepsilon$ for all $x \geq n_1$. Similarly, there exists a positive integer $n_2> 0$ such that $P_{x, x-1}V(x-1) \leq 1 - \varepsilon$ for all $x \leq -n_2$. Let $n_0 = \max\{x_1, x_2\}$. Now, let $F$ be the finite set $\{-n_0, -n_0+1, \dots, n_0-1, n_0\}$. Then clearly $V$ satisfies the first condition of Foster's theorem. For the second condition, let $x> n_0$. Then
$$\sum_{x'\in \integers}P_{x,x'}V(x') = P_{x,x-1}(x-1) + P_{x,x+1}(x+1) \leq 1\times (x-1) + (1 - \varepsilon)= V(x) - \varepsilon.
$$
Similarly, for $x<-n_0$, we have $\sum_{y\in \integers}P_{xy}V(y) \leq V(x) - \varepsilon$. This verifies the second condition.
\end{proof}

Having proved that $(X_i)$ is irreducible and positive recurrent, the ergodic theorem for Markov chains applies (see for example \citet[Theorem~1.10.2, p.~53]{norris1997markov}). Hence, for any bounded function $g:\integers\to \reals$, we know that $n^{-1}\sum_{i=1}^n g(X_i)$ converges. In particular, since the stationary distribution is non-degenerate, $J_n$ converges in probability to an invertible matrix $J$.

The ergodicity of the Bruceton design can be viewed as a consequence of its rubber banding effect (as first described by \citet{durham1994random}), namely that the probability of moving one step down increases as $X_n$ gets large, and, conversely, that the probability of moving one step up increases as $X_n$ gets small. To see how this rubber banding ensures ergodicity, consider a reverse Bruceton design, in which we change the signs in  \eqref{eq:bruceton}. That is, we set $X_{i+1} = X_i + d$ if $Y_i=1$ and $X_{i+1} = X_i - d$ if $Y_i=0$. This clearly does not have any rubber banding property, and, by comparing to a symmetric random walk on the integers, we clearly see that $|X_n| \to\infty$ almost surely. Fittingly, $|X_n|\to\infty$ implies that one of the sets $\{i\in \nat : Y_i = 0\}$ and $\{i\in \nat : Y_i = 1\}$ has finite cardinality, and so not even existence of the maximum likelihood estimator is guaranteed.

A question which perhaps has not been fully appreciated in the literature is what happens if we introduce an adaptive step size to the Bruceton design, say, one which decreases with $n$. For example, we could substitute $d = d_i = c/(2i)$ in \eqref{eq:bruceton}, hoping that in addition to ergodicity, we would also have that $(X_i)$ converges almost surely to the median of the distribution function $H$. In fact, making the substitution, we obtain $X_{i} = X_{i-1} - c(Y_{i-1} - 1/2)/(i-1),$ which we recognize as a special case of the Robbins--Monro procedure, to which we shall now direct our attention.

\subsection{Robbins--Monro}
The study of the Robbins--Monro design \citep{robbins1951stochastic} comprises a vast literature spanning more than a half-century (see \citet{lai2003stochastic} for an excellent review containing many fascinating historical remarks). This procedure aims to find the unique root $\gamma$ of an unknown function $M$ under the regression model
\begin{equation}\label{eq:robbins_monro}
    Y_i = M(X_i) + \varepsilon_i,
\end{equation}
where the $\varepsilon_i$ are independent noise variables. \citet{robbins1951stochastic} proposed an adaptive design of the form $h(\{(X_j, Y_j)\}_{1\leq j\leq i-1}) = X_{i-1} - a_{i-1}Y_{i-1}$ where $a_i > 0$ for all $i$, and proved that $X_i\to\gamma$ in probability provided $\sum_{i=1}^\infty a_i^2 < \infty$ and $\sum_{i=1}^\infty a_i = \infty$. In their paper they note that the procedure \eqref{eq:robbins_monro} includes the binary regression setting outlined above. Indeed, suppose we wish to estimate the $q$th quantile $\gamma_q$ of an unknown cumulative distribution function (c.d.f.) $F$  from binary responses. This corresponds to the binary regression model~\eqref{eq:binarymodel}, with $F(x) = H(\theta^\top z)$. Replacing $Y_i$ by $Y_i - q$, we have $M(X_i) = F(X_i) - q$, with $\varepsilon_i + F(X_i)\,|\, X_i\sim\mathrm{Bernoulli}(F(X_i))$. The large-sample properties of the covariate process $(X_i)$ were extensively studied in the decade following their publication, summarised in the review by \citet{schmetterer1961stochastic}. The consistency result for $(X_i)$ was first improved upon by \citet{blum1954approximation}, who proved that $X_i\to\gamma$ almost surely. Using this consistency result, \citet{chung1954stochastic, hodges1956two, sacks1958asymptotic} established asymptotic normality of $X_i$. As noted by \citet{cochran1965robbins}, this yields asymptotic normality of the estimate of the $q$th quantile $\gamma_q$ in the case of binary regression. Further large-sample results were later proved for the Robbins--Monro procedure and variants of it by \citet{lai1979adaptive, lai1981consistency, lai1982adaptive, wei1987multivariate}.

As $X_i\to\gamma$ almost surely in the Robbins--Monro design, the most natural estimator for the median of the underlying distribution function $F$ given $n$ observations is simply $X_{n+1}$, namely the next hypothetical input. Hence, nearly all asymptotic theory for the design concerns the large-sample behaviour of $X_{n+1}$, which is entirely model-independent. If, however, we want to construct a confidence interval for $\gamma$, then a model needs to be imposed. Furthermore, in the context of binary regression, we might want to estimate other quantiles than simply $\gamma = \gamma_{1/2}$, which will also require us to impose a model on the data. Hence, the model-specific large-sample behaviour of the maximum likelihood estimator may also be of interest when the Robbins--Monro design is employed. As with the Bruceton design, asymptotic normality will be established by verifying S-DQM and convergence of $J_n$ in probability. For the former, we may appeal directly to Lemma~\ref{lemma:conv-as}. For the latter, we only require that $X_i\to\gamma$ in probability. Indeed, if this is assumed to hold, then each term of $J_n$ also converges in probability by the continuous mapping theorem. Using the fact that the Fisher information is uniformly integrable (indeed bounded) for probit and logistic regression, we see that $J_n$ also converges in probability by \citet[Remark 3.1]{linero2013toeplitz}.

It is worth exploring why it is not possible to simply apply Lemma~\ref{lemma:lemma7.6ext2} like we did for the Bruceton design in order to verify S-DQM for the Robbins--Monro design. Let us attempt to do so and see what goes wrong. Like before, let $g$ be the constant function 1, and now, let $\nu$ be the counting measure on the (countable) union of the finite domains of $X_i$ for all $i$. Then all conditions are indeed satisfied except for condition (ii). As the sequence $X_1, X_2$ converges, we may no longer apply the Weierstrass M-test to assert continuity of the function $\theta\mapsto \int J_\theta(x)d\nu(x)$. In fact, this function will be infinite everywhere. Somehow, the fact that the domain of the Bruceton design are spaced apart ensures the argument to go through, but we run into problems when the covariate sequence converges or is defined on a compact domain. This demonstrates explicitly why we also need results like Lemmas~\ref{lemma:compact} and \ref{lemma:conv-as}.

\section{A Markovian Langlie design}\label{sec:langlie}
Having applied Theorem~\ref{thm:master_theorem} to the Bruceton design in the previous section, we now move on to a more complicated experimental design, namely a Markovian version of the Langlie design (see \citet{langlie1962reliability} for the original non-Markovian version). Our results in this section may serve as a first step on the way to establishing convergence of $J_n$ for the original Langlie design, which remains an open problem. Our Markovian Langlie design depends on two parameters, $a<b$, serving as lower and upper bounds for the measurements, respectively. These parameters are pre-specified by the researcher. It is assumed that the researcher chooses $a$ and $b$ so that they cover the true, unknown, median. That is, $a < -\alpha/\beta < b$. The first test is conducted at $X_1 = (a+b)/2$, and given the results of the first $i-1$ tests, the input $X_{i}$ is decided by the rule
\begin{equation}\label{eq:markov_langlie}
    X_{i} = \begin{cases} (a + X_{i-1}) / 2 + \varepsilon U_{i-1} & \mathrm{if}\, Y_{i-1} = 1 \\ (X_{i-1} + b)/2 -\varepsilon U_{i-1} & \mathrm{if}\, Y_{i-1} = 0,\end{cases}
\end{equation}
where the $U_i$ are i.i.d.~$\mathrm{Uniform}[0,1]$ random variables and $\varepsilon>0$ is a constant which can be made arbitrarily small. In the remainder of this section, we will set $a=0$ and $b=1$ without loss of generality. By compactness of the domain of the $X_i$, we see immediately that S-DQM is satisfied by Lemma~\ref{lemma:compact}. The remainder of this section is thus devoted to establishing ergodicity of the chain $(X_i)$ in order to verify the convergence of $J_n$, which in turn will imply asymptotic normality of the maximum likelihood estimator.

The presence of the terms $\varepsilon U_{i-1}$ in \eqref{eq:markov_langlie} is purely technical, but necessary to ensure irreducibility of the chain $(X_i)$. Indeed, if $\varepsilon = 0$, then for all $n$, we would have $X_n \in \mathcal{D}_n = \{k/2^n : 1\leq k \leq 2^n-1,\, k\,\,\mathrm{odd}\}$. As $\mathcal{D}_n\cap\mathcal{D}_m = \emptyset$ for all $n\neq m$, it would be impossible for the chain to visit any previously visited state, making it reducible. 

The kernel for the Markovian Langlie is given by
\begin{equation}\label{eq:ctslanglie}
k(x, \mathrm{d} x') = H_x\frac1\varepsilon\mathbbm{1}_{\left[\frac{x}2, \frac{x}2+\varepsilon\right]}(\mathrm{d} x') + \left[1 - H_x\right]\frac1\varepsilon\mathbbm{1}_{\left[\frac{x+1}2-\varepsilon, \frac{x+1}2\right]}(\mathrm{d} x'),
\end{equation}
where $H_x = H(\alpha + \beta x) = H(\theta^\top z)$ and we assume that $0 < \varepsilon < 1/2$, so that the two intervals $\left[\frac{x}2, \frac{x}2+\varepsilon\right]$ and $\left[\frac{x+1}2-\varepsilon, \frac{x+1}2\right]$ do not overlap.

\begin{lemma}\label{lemma:irreducible}
    The chain defined in \eqref{eq:markov_langlie} is irreducible with respect to the Lebesgue measure on $[0, 1]$ equipped with the Borel $\sigma$-algebra.
\end{lemma}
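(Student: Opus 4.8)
The plan is to verify $\phi$-irreducibility with $\phi$ equal to the Lebesgue measure directly from the definition: given an arbitrary starting point $x_0 \in [0,1]$ and an arbitrary Borel set $A \subseteq [0,1]$ with $\mathrm{Leb}(A) > 0$, I must exhibit an integer $n$ with $k^n(x_0, A) > 0$. Two reductions prepare the ground. First, the kernel in \eqref{eq:ctslanglie} is a mixture of the ``down'' sub-kernel $k_0(x,\mathrm{d}x') = H_x\varepsilon^{-1}\one_{[x/2,\,x/2+\varepsilon]}(\mathrm{d}x')$ and the ``up'' sub-kernel $k_1(x,\mathrm{d}x') = (1-H_x)\varepsilon^{-1}\one_{[(x+1)/2-\varepsilon,\,(x+1)/2]}(\mathrm{d}x')$; hence $k^n(x_0, A) \ge (k_{m_1}\cdots k_{m_n})(x_0, A)$ for any word $(m_1,\dots,m_n)\in\{0,1\}^n$, and it suffices to find one word charging $A$. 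Second, by the Lebesgue density theorem there is a point $t \in (0,1)$ with $\mathrm{Leb}(A \cap (t-\rho,t+\rho)) > 0$ for every $\rho > 0$, so it is enough to steer the chain, in finitely many steps, into a distribution possessing a strictly positive density on some open interval containing $t$. Throughout we use that, for probit and logistic $H$, both $H_x$ and $1-H_x$ are bounded below by a positive constant on the compact set $[0,1]$.

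The engine is a one-step ``interval-propagation'' rule, obtained by writing out the convolution defining $k_0$ and $k_1$: if a measure has a density strictly positive on an open interval $(p,q) \subseteq [0,1]$, then its image under $k_0$ has a density strictly positive on exactly $(p/2,\, q/2+\varepsilon)$, and its image under $k_1$ on exactly $((p+1)/2-\varepsilon,\, (q+1)/2)$. Applying this to $\delta_{x_0}$, one ``down'' move produces a density positive on $(x_0/2, x_0/2+\varepsilon)$, of width $\varepsilon$. Writing $c$ and $w$ for the midpoint and width of the evolving interval, further moves act by $w \mapsto w/2+\varepsilon$ (so $w$ stays in $[\varepsilon, 2\varepsilon)$ once it has reached $\varepsilon$) and $c \mapsto c/2+\varepsilon/2$ (``down'') or $c \mapsto c/2+(1-\varepsilon)/2$ (``up''). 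These two affine contractions form an iterated function system whose attractor is exactly $[\varepsilon, 1-\varepsilon]$, while iterating ``down'' alone drives the interval to $(0, 2\varepsilon)$ and iterating ``up'' alone to $(1-2\varepsilon, 1)$.

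Combining these observations: if $t \in [\varepsilon, 1-\varepsilon]$, choose, using the attractor characterisation, a finite word of moves whose induced midpoint map sends $c$ to within $\varepsilon/2$ of $t$; since $w \ge \varepsilon$ after the first move, the resulting open interval contains $t$. If instead $t \in (0, 2\varepsilon)$, take $j$ consecutive ``down'' moves with $j$ large, so that the interval $(x_0 2^{-j},\, x_0 2^{-j}+\varepsilon(2-2^{1-j}))$, whose endpoints tend to $0$ and $2\varepsilon$, contains $t$; the case $t \in (1-2\varepsilon, 1)$ is symmetric, using ``up'' moves. Since $\varepsilon < 1/2$, the three sets $(0,2\varepsilon)$, $[\varepsilon,1-\varepsilon]$ and $(1-2\varepsilon,1)$ cover $(0,1)$, so in every case we obtain an integer $n$ and an open interval $V \ni t$ such that $k^n(x_0,\cdot)$ dominates a sub-probability measure with a density strictly positive on $V$. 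Consequently $k^n(x_0, A) \ge \int_{A \cap V}(\text{density})\,\mathrm{d}\mathrm{Leb} > 0$, because $V$ contains an interval $(t-\rho,t+\rho)$ that meets $A$ in positive Lebesgue measure. Since $x_0$ and $A$ were arbitrary, the chain is Lebesgue-irreducible.

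I expect the main obstacle to be the ``interior'' case: reaching neighbourhoods of points $t$ bounded away from $0$ and $1$ is precisely the role played by the noise terms $\varepsilon U_i$ in \eqref{eq:markov_langlie}, and making this rigorous requires the iterated-function-system bookkeeping above, in particular identifying the attractor of the midpoint dynamics as the full interval $[\varepsilon, 1-\varepsilon]$ and checking that, together with the boundary slabs reached by monotone words, it exhausts $(0,1)$. The remaining pieces, namely the convolution computation behind the propagation rule, the reduction to a Lebesgue density point of $A$, and the positivity of $H_x$ on $[0,1]$, are routine.
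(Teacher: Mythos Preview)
Your proposal is correct and takes a genuinely different route from the paper's proof. The paper tracks \emph{all} $2^{i-1}$ sub-intervals generated by the $2^{i-1}$ binary words of length $i-1$, computes their common length $\mathcal{L}_i = 2(1-2^{1-i})\varepsilon$, and argues that once $\mathcal{L}_i$ exceeds the dyadic gap $2^{1-i}$ these intervals overlap pairwise, so the full $i$-step density is positive on the single interval $[1/2^i,\,1-1/2^i]$; the extension to arbitrary starting points is then sketched in one sentence. You instead follow a \emph{single} word and steer the resulting interval to a prescribed Lebesgue density point of $A$: the width recursion $w\mapsto w/2+\varepsilon$ keeps $w\ge\varepsilon$, and the midpoint obeys the contractive IFS $\{c\mapsto c/2+\varepsilon/2,\;c\mapsto c/2+(1-\varepsilon)/2\}$ with attractor $[\varepsilon,1-\varepsilon]$, so Hutchinson's theorem lets you place the midpoint within $\varepsilon/2$ of any interior target, while monotone words handle the boundary slabs $(0,2\varepsilon)$ and $(1-2\varepsilon,1)$. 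The paper's covering argument yields an explicit threshold $N>\log_2(1+1/\varepsilon)+1$ and avoids citing IFS machinery; your targeting argument treats arbitrary starting points uniformly from the outset (where the paper's last paragraph is somewhat informal) and would adapt more readily to variants of the design with different affine pieces, at the cost of invoking the standard attractor convergence result.
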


\begin{proof}
    See the Appendix.
\end{proof}

The next step in proving ergodicity is to prove that the chain emits a stationary measure. Since the state space $[0, 1]$ is compact, the easiest way to show this is to exploit the continuity properties of the kernel $k$. More precisely, $k$ is \emph{weakly Feller}, which means that for any bounded function $g:[0, 1]\to \reals$, the function
$$x\mapsto \int_0^1 g(y)k(x, \mathrm{d} y)$$
is also bounded and continuous. Indeed, this follows from the fact that $H_x$ is continuous in $x$ and the functions 
$$
x \mapsto \int_{x/2}^{x/2 + \varepsilon} g(y)\,\mathrm{d} y\quad\text{and}\quad x\mapsto \int_{\frac{x+1}2 - \varepsilon}^{\frac{x+1}2}g(y)\,\mathrm{d} y, 
$$
are continuous. By \citet[Theorem 3.3]{lasserre1997invariant}, the kernel $k$ has an invariant probability measure.

\begin{remark}
    In fact, the chain is also \emph{strongly Feller}, since the transition kernel also maps all bounded measurable functions to bounded continuous functions by virtue of compactness of $[0, 1]$. However, for our purposes, the weak Feller property suffices.
\end{remark}

Since we need ergodicity for the specific starting point $X_1 = 1/2$ (and not just for almost all starting points), we also need to show that $(X_i)$ is Harris recurrent. We will do this by providing a petite recurrent set. This requires a lemma.

\begin{lemma}\label{lemma:drift_function}
    There exists $\varepsilon > 0$ and $m$ (depending on $\varepsilon$) such that for the Markovian Langlie design, the function $V(x) = mx + 1$ satisfies the Lyapunov drift condition. That is,
        $$\int V(y) k(x, \mathrm{d} y) - V(x) + 1 < 0$$
    for all $x$ in some open neighbourhood $1$. Here, open neighbourhood means a set of the form $(\eta, 1]$ for some $\eta\in [0, 1)$.
\end{lemma}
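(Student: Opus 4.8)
The plan is to compute the one-step drift $\int V(y)\,k(x,\dd y)$ explicitly using the kernel \eqref{eq:ctslanglie}, exploit the fact that $V$ is affine (so integrating against the two uniform pieces of the kernel just evaluates $V$ at the midpoints of the respective intervals, up to an $O(\eps)$ term), and then show that as $x \uparrow 1$ the quantity $\int V(y)\,k(x,\dd y) - V(x) + 1$ tends to a strictly negative limit; continuity then gives the desired neighbourhood $(\eta,1]$. Concretely, since $V(y) = my+1$ and $y$ is uniform on $[\tfrac{x}{2},\tfrac{x}{2}+\eps]$ with probability $H_x$ and uniform on $[\tfrac{x+1}{2}-\eps,\tfrac{x+1}{2}]$ with probability $1-H_x$, one gets
\begin{align*}
\int V(y)\,k(x,\dd y)
&= H_x\Bigl(m\bigl(\tfrac{x}{2}+\tfrac{\eps}{2}\bigr)+1\Bigr)
+ (1-H_x)\Bigl(m\bigl(\tfrac{x+1}{2}-\tfrac{\eps}{2}\bigr)+1\Bigr)\\
&= 1 + \tfrac{m}{2}\Bigl(x + (1-H_x)\Bigr) + m\eps\bigl(\tfrac12 - H_x\bigr).
\end{align*}
Hence
\begin{align*}
\int V(y)\,k(x,\dd y) - V(x) + 1
&= -\tfrac{m}{2}\,x + \tfrac{m}{2}\bigl(1 - H_x\bigr) + m\eps\bigl(\tfrac12 - H_x\bigr) + 1.
\end{align*}
Evaluating the limit as $x \to 1$: since $a<-\alpha/\beta<b$ with $a=0,b=1$, the median lies strictly inside $(0,1)$, so $H_1 = H(\alpha+\beta) > H(0) = \tfrac12$ (using $\beta>0$), and in particular $1 - H_1 < \tfrac12$. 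The limiting value of the displayed expression is $-\tfrac{m}{2} + \tfrac{m}{2}(1-H_1) + m\eps(\tfrac12 - H_1) + 1 = -\tfrac{m}{2}H_1 - m\eps(H_1 - \tfrac12) + 1 = 1 - \tfrac{m}{2}H_1 - m\eps(H_1-\tfrac12)$. Since $H_1 > \tfrac12 > 0$, both the $-\tfrac{m}{2}H_1$ term and the $-m\eps(H_1-\tfrac12)$ term are negative, so for $m$ chosen large enough (e.g.\ $m > 2/H_1$, with $\eps$ then fixed arbitrarily small) this limit is strictly negative. By continuity of $x \mapsto H_x$ and of the affine terms, the function $x \mapsto \int V(y)\,k(x,\dd y) - V(x) + 1$ is continuous on $[0,1]$, so it remains strictly negative on some half-open interval $(\eta,1]$; this is exactly the asserted drift condition.

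The main obstacle is not really a difficulty of technique but of bookkeeping: one must make sure the interplay between the two free constants $m$ and $\eps$ is handled in the right order — $m$ is chosen first (large, to beat the $+1$), and then $\eps$ can be taken as small as the researcher wishes without destroying negativity, which matches the phrasing ``$\eps>0$ \ldots can be made arbitrarily small'' in \eqref{eq:markov_langlie} and the statement of the lemma. One should also double-check that $V(x) = mx+1$ is bounded and bounded away from $0$ on $[0,1]$ (it is, being affine and positive there), so that it is a legitimate Lyapunov/drift function in the sense required for the petite-set recurrence argument that follows; and that the non-overlap condition $0<\eps<1/2$ from \eqref{eq:ctslanglie} is compatible with the ``$\eps$ small'' choice, which it trivially is.
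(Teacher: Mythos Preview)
Your approach is the same as the paper's: compute the drift explicitly, evaluate at $x=1$, show negativity for suitable $m,\eps$, and extend to a neighbourhood $(\eta,1]$ by continuity of $x\mapsto H_x$. The argument is correct in outline but contains a sign slip in the $\eps$-term. Weighting the midpoints $x/2+\eps/2$ and $(x+1)/2-\eps/2$ by $H_x$ and $1-H_x$ gives an $\eps$-contribution of $m\eps\bigl(H_x-\tfrac12\bigr)$, not $m\eps\bigl(\tfrac12-H_x\bigr)$. Consequently, at $x=1$ the drift is
\[
1-\tfrac{m}{2}H_1+m\eps\bigl(H_1-\tfrac12\bigr)
\;=\;
m\bigl[\eps(H_1-\tfrac12)-H_1/2\bigr]+1,
\]
which is the paper's expression. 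The $\eps$-term is therefore \emph{positive} and works against you, so your claim that ``both terms are negative'' is wrong. The conclusion is nonetheless salvaged: since $H_1/(2H_1-1)\geq 1$ for all $H_1\in(\tfrac12,1]$ and $\eps<\tfrac12$ is already assumed, the bracket $\eps(H_1-\tfrac12)-H_1/2$ is automatically negative, and then taking $m$ large enough makes the whole expression negative. This also explains why the paper (and the lemma statement) fixes $\eps$ first and then chooses $m$ depending on $\eps$, rather than the reverse order you describe.
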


\begin{proof}
    See the Appendix.
\end{proof}

Having found a Lyapunov function for the chain $(X_i)$, the set $[0, \eta]$ is recurrent by \citet[Proposition~7.12, p.~147]{benaim2022markov}. Furthermore, by irreducibility, the set $[0, \eta]$ is also petite by Proposition~6.2.8~(ii) in \citet{meyn1993markov}. This means that we have identified a petite recurrent set, and so the chain is Harris recurrent by \citet[Proposition~7.11, p.~147]{benaim2022markov}. Finally, Theorem~17.3.2 in \citet{meyn1993markov} shows that the chain $(X_i)$ is ergodic. We conjecture that the chain is also ergodic if we set $\varepsilon_i=0$ for all $i$, but further research into perturbed Markov kernels is required to show this, as discussed by \citet{shardlow2000perturbation, alquier2014noisy}.

\section*{Acknowledgements}
We thank Professor Nancy Flournoy and Dr.~Benjamin Lang for useful and enlightening discussions.

\appendix

\setcounter{equation}{0}
\renewcommand{\theequation}{S.\arabic{equation}}
\renewcommand{\thelemma}
{S.\arabic{lemma}}

\section*{Appendix}

\subsection*{Proof of Lemma~\ref{lemma:lemma7.6ext2}}
\begin{proof} We first prove that Conditions~(i) and~(ii) imply that $\int D_{h/\sqrt{n}}(x)g(x) \,\dd\nu(x) = o(1/n)$. This part of the proof is essentially the same as the proof of Lemma~7.6 in \citet{vandervaart1998asymptotic}. Define $\bar{J}_{\theta} = \int J_{\theta}(x) g(x)\,\dd\nu(x)$ and $s_{\theta}(y \given x) = \sqrt{f_{\theta}(y \given x)}$, and let $\dot{s}_{\theta}(y \given x)$ be the derivative of $s_{\theta}(y \given x)$ with respect to $\theta$. Since $s_{\theta}$ is differential at $\theta_0$ for all $(x,y)$, $\dot{s}_{\theta}(y \given x) = \half u_{\theta_0}(y \given x) \sqrt{f_{\theta}(y \given x)}$, where $u_{\theta_0} = \dot{f}_{\theta_0}/f_{\theta_0}$, and $\dot{f}_{\theta}$ is the derivative of $f_{\theta}(y \given x)$ with respect to $\theta$. By Condition~(i) 
\begin{equation}
\frac{s_{\theta_0 + th } - s_{\theta_0}}{t} = \int_0^1 \dot{s}_{\theta_0 + u th} \,\dd u. 
    \notag
\end{equation}
Viewing this as an expectation of $\dot{s}_{\theta_0 + u th}(y \given x)$ with respect to the uniform measure $\dd u$ on the unit interval, Jensen{'}s inequality yields
\begin{equation}
\left\{\frac{s_{\theta_0 + th } - s_{\theta_0}}{t}\right\}^2 \leq \int_0^1 \left\{ h^{\tr}\dot{s}_{\theta_0 + u th}\right\}^2 \,\dd u = \quart\int_0^1 \left\{ h^{\tr}u_{\theta_0 + uth } \right\}^2 f_{\theta_0+uth} \,\dd u. 
    \notag
\end{equation}
Integrating with respect to $\mu$, then applying Tonelli{'}s theorem, applicable because $\{h^{\tr}u_{\theta} \sqrt{f_{\theta}}\}^2 \geq 0$ for all $(x,y)$ and $\theta$ close to $\theta_0$, we get
\begin{equation}
\int \left\{ \frac{s_{\theta_0 + th } - s_{\theta_0}}{t}\right\}^2\,\dd\mu(y) 
\leq \quart \int \int_0^1 
\left\{h^{\tr}u_{\theta_0 + uth } \right\}^2 f_{\theta_0+uth} \,\dd u\,\dd\mu(y) = \quart
\int_0^1 h^{\tr}J_{\theta_0 + uth }(x) h \,\dd u,
    \notag
\end{equation}
and yet another application of Tonelli{'}s theorem yields
\begin{equation}
\int \int \left\{ \frac{s_{\theta_0 + th } - s_{\theta_0}}{t}\right\}^2\,\dd\mu(y) g(x) \,\dd\nu(x) 
\leq \quart
\int_0^1 h^{\tr}\bar{J}_{\theta_0 + uth } h \,\dd u.
    \notag
\end{equation}
By Condition~(ii), $\theta \mapsto \bar{J}_{\theta}$ is continuous, which by the developments so far entails that
\begin{equation}
\lim_{t \to 0}\int \int \left\{ \frac{s_{\theta_0 + th } - s_{\theta_0}}{t}\right\}^2\,\dd\mu(y) g(x) \,\dd\nu(x) 
\leq \quart
\int_0^1 h^{\tr}\bar{J}_{\theta_0} h \,\dd u.
    \notag
\end{equation}
By the integrability of $s_{\theta}$, $t^{-1}(s_{\theta_0 + th} - s_{\theta_0}) \to \dot{s}_{\theta_0}$ for each $(x,y)$. Combining this with the bounds above, Proposition~2.29 in~\citet{vandervaart1998asymptotic} gives
\begin{equation}
\int \int \left\{\frac{s_{\theta_0 + th}(y \given x) - s_{\theta_0}(y \given x)}{t} - h^{\tr}\dot{s}_{\theta_0}(y \given x)  \right\}^2\,\dd\mu(y) g(x)\,\dd\nu(x) \to 0, \quad \text{as $t \to 0$},
    \notag
\end{equation}
and we conclude that $\int D_{\theta_0,h/\sqrt{n}}(x) g(x)\,\dd\nu(x) = o(1/n)$, as desired. To get from here to S-DQM, we use Conditions~(iii) and (iv). These say that $\{P_{\theta_0}^{X_1},P_{\theta_0}^{X_2},\ldots\}$ is dominated by the $\sigma$-finite measure $\nu$, and that the densities $\dd P_{\theta_0}^{X_i}/\dd \nu$ are bounded by $g$. Since $D_{\theta_0,h/\sqrt{n}}(x) \geq 0$ for all $n$ and $x$,  
\begin{align*}
n\max_{i \leq n}\E_{\theta_0}\,D_{\theta_0,h/\sqrt{n}}(X_i) & = 
n \max_{i \leq n} \int D_{\theta_0,h/\sqrt{n}}(x)\,\dd P_{\theta_0}^{X_i}(x)\\
& = n \max_{i \leq n} \int D_{\theta_0,h/\sqrt{n}}(x)\,\frac{\dd P_{\theta_0}^{X_i}}{\dd \nu}(x)\,\dd \nu(x) 
\leq  n  \int D_{h/\sqrt{n}}(x)g(x)\,\dd \nu(x),
\end{align*}
where the right hand side is $o(1)$ by the above. Since $D_{\theta,h/\sqrt{n}}$ is non-negative, it now follows from Markov{'}s inequality that $\sum_{i=1}^n D_{\theta_0,h/\sqrt{n}}(X_i) = o_p(1)$.
\end{proof}

\subsection*{Proof of Lemma~\ref{lemma:compact}}
\begin{proof}
    Take $\theta$ in the interior of $\Theta_0$ and choose $h$ sufficiently small such that $\theta + hz\in \Theta_0$ for any $0\leq z\leq 1$. Write $s_\theta(y\given x) = \sqrt{f_\theta(y\given x)}$ and $g_{n, \theta}(x) = nD_{\theta, h/\sqrt{n}}(x)$. For any $z\in [0, 1]$, there exists by the mean value theorem $\tilde{z}$ between $0$ and $z$ such that $$\left.\frac{s_{\theta + hz}(y\given x) - s_\theta(y\given x)}{z} = \frac{\partial}{\partial z}s_{\theta + hz}(y\given x)\right|_{z={\tilde{z}}}.$$
    Hence 
    \begin{align*}
        |g_{n, \theta}(x)| & =  \sum_{y\in\yspace}\left\{\frac{s_{\theta + h/\sqrt{n}}(y\given x) - s_\theta(y\given x)}{1/\sqrt{n}} - \frac12 h^\top u_\theta(y\given x) s_\theta(y\given x)\right\}^2 \\
        & = \sum_{y\in\yspace}\left\{\left.\frac{s_{\theta + h/\sqrt{n}}(y\given x) - s_\theta(y\given x)}{1/\sqrt{n}} - \frac{\partial}{\partial z}s_{\theta + hz}(y\given x)\right|_{z=0}\right\}^2 \\
        & = \sum_{y\in\yspace}\left\{\left.\frac{\partial}{\partial z}s_{\theta+hz}(y\given x)\right|_{z=\tilde z} - \left.\frac{\partial}{\partial z}s_{\theta + hz}(y\given x)\right|_{z=0}\right\}^2.
        \end{align*}
    By compactness of $\xspace\times\Theta_0$ and continuity of $\frac{\partial}{\partial\theta}s_\theta(y\given x)$, we therefore have
    $$|g_{n,\theta}(x)| \leq 2\sum_{y\in\yspace}\sup_{\theta\in\Theta_0}\left|\frac{\partial}{\partial \theta} h^\top s_\theta(y\given x)\right|^2 \leq 2\sum_{y\in\yspace}\sup_{x\in\xspace}\sup_{\theta\in\Theta_0}\left|\frac{\partial}{\partial\theta} h^\top s_\theta(y\given x)\right|^2 = M,$$
    so for each $\theta\in\Theta_0$, the sequence $(g_{n, \theta})$ is uniformly bounded on the compact domain $\xspace$. Note that by the Cauchy--Schwarz inequality, we also have the uniform bound $$\sum_{y\in\yspace}\left|\frac{s_{\theta + h/\sqrt{n}}(y\given x) - s_\theta(y\given x)}{1/\sqrt{n}} - \frac12 h^\top u_\theta(y\given x) s_\theta(y\given x)\right| \leq \sqrt{k}M.$$
    Next, we aim to show that for each $\theta$, the sequence $(g_{n, \theta}$) is equicontinuous in $x$. We do this by bounding the functions $\frac{\partial}{\partial x}g_{n, \theta}(x)$ uniformly. Now, by the chain rule,
    \begin{align*}
        \left|\frac{\partial}{\partial x}g_{n,\theta(x)}\right| & = \sum_{y\in\yspace}\frac{\partial}{\partial x}\left\{\left.\frac{\partial}{\partial z}s_{\theta + hz}(y\given x)\right|_{z=\tilde z} - \left.\frac{\partial}{\partial z} s_{\theta + hz}(y\given x)\right|_{z=0}\right\}^2 \\
        & \leq 2\sqrt{k}M\sum_{y\in\yspace}\left|\left\{\left.\frac{\partial}{\partial x}\frac{\partial}{\partial z} s_{\theta + hz}(y\given x)\right|_{z=\tilde z} - \left.\frac{\partial}{\partial x}\frac{\partial}{\partial z}s_{\theta + hz}(y\given x)\right|_{z=0}\right\}\right|,
    \end{align*}
    and so by compactness of $\xspace\times\Theta_0$ and continuity of $\frac{\partial}{\partial x}\frac{\partial}{\partial\theta} s_\theta(y\given x)$, we get
    $$\left|\frac{\partial}{\partial x} g_{n, \theta}(x)\right| \leq 4\sqrt{k} M \sum_{y\in\yspace}\sup_{x\in\xspace}\sup_{\theta\in\Theta_0}\left|\frac{\partial}{\partial x}\frac{\partial}{\partial \theta} h^\top s_\theta(y\given x)\right| = L,$$
    providing the required uniform bound. We conclude that $(g_{n, \theta})$ is equicontinuous in $x$.

    Now, write $a_n = \sum_{i=1}^n\E\,D_{\theta, h/\sqrt{n}}(X_i)$ and let $(a_{n_k})$ be any subsequence of $(a_n)$. By the above, the subsequence $(g_{n_k, \theta})$ is uniformly bounded and equicontinuous on a compact domain, so by the Arzel\`{a}--Ascoli theorem, there exists a subsequence $(g_{n_{k_j}, \theta})$ which converges uniformly on $\xspace$. We know that $g_{n, \theta}$ converges pointwise to 0, so the uniform limit of $(g_{n_{k_j}, \theta})$ must also be 0. But then
    $$|a_{n_{k_j}}| \leq n_{k_j}\sup_{x\in\xspace}|D_{\theta, h/\sqrt{n_{k_j}}}(x)| = \sup_{x\in\xspace}|g_{n_{k_j}}(x)| \to 0,$$
    so we found a subsequence of $(a_{n_k})$ converging to zero. Since $(a_{n_k})$ was arbitrary, this means that $a_n\to 0$ as well.
\end{proof}

\subsection*{Proof of Lemma~\ref{lemma:conv-as}}
We begin with a corollary to Lemma~\ref{lemma:compact}
\begin{corollary*}\label{cor:deterministic}
    Suppose all the conditions of Lemma~\ref{lemma:compact} are satisfied. Then for any deterministic sequence $(x_i)$ in $\xspace$, $\sum_{i=1}^n D_{\theta, h/\sqrt{n}}(x_i) \to 0$.
\end{corollary*}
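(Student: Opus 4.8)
The plan is to leverage the proof of Lemma~\ref{lemma:compact} almost verbatim, extracting from it a conclusion slightly stronger than the one stated there. Recall that in that proof one fixes $\theta$ in the interior of $\Theta_0$, chooses $h$ small enough that $\theta + hz \in \Theta_0$ for all $z \in [0,1]$, and sets $g_{n,\theta}(x) = nD_{\theta,h/\sqrt n}(x)$; using compactness of $\xspace \times \Theta_0$ together with continuity of the first and mixed second derivatives of $(x,\theta)\mapsto\sqrt{f_\theta(y\given x)}$, one then shows that (a) the family $(g_{n,\theta})_n$ is uniformly bounded on $\xspace$ by a constant $M$, (b) it is equicontinuous in $x$ through a uniform bound $L$ on $|\partial g_{n,\theta}(x)/\partial x|$, and (c) it converges to zero pointwise on $\xspace$.

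First I would record the following consequence of (a)--(c), which is really the content of the Arzel\`{a}--Ascoli step in the proof of Lemma~\ref{lemma:compact}, now isolated as a standalone fact:
$$\sup_{x\in\xspace} g_{n,\theta}(x) = n\sup_{x\in\xspace} D_{\theta,h/\sqrt n}(x) \longrightarrow 0 \quad\text{as } n\to\infty.$$
To prove this, given any subsequence $(n_k)$, the functions $(g_{n_k,\theta})$ are uniformly bounded and equicontinuous on the compact set $\xspace$, so by the Arzel\`{a}--Ascoli theorem a further subsequence $(g_{n_{k_j},\theta})$ converges uniformly on $\xspace$; since $g_{n,\theta}\to 0$ pointwise, the uniform limit must be $0$, whence $\sup_{x\in\xspace} g_{n_{k_j},\theta}(x)\to 0$. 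As every subsequence of the numerical sequence $n\mapsto\sup_{x\in\xspace}g_{n,\theta}(x)$ has a further subsequence tending to $0$, the whole sequence tends to $0$.

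Given this, the corollary is immediate: since $D_{\theta,h/\sqrt n}(x)\ge 0$ for all $x$ and all $n$, for any deterministic sequence $(x_i)$ in $\xspace$ we have
$$0 \le \sum_{i=1}^n D_{\theta,h/\sqrt n}(x_i) \le n\sup_{x\in\xspace} D_{\theta,h/\sqrt n}(x) \longrightarrow 0,$$
which is the claim.

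I do not expect any genuine obstacle here: all the analytic work --- the mean value theorem reduction rewriting $g_{n,\theta}(x)$ as a sum over $\yspace$ of squared increments of $z\mapsto\partial s_{\theta+hz}/\partial z$, and the uniform bounds $M$ and $L$ --- has already been carried out in the proof of Lemma~\ref{lemma:compact}. The one point to be careful about is to note explicitly that the subsequence argument at the end of that proof in fact delivers the uniform convergence $\sup_{x\in\xspace}g_{n,\theta}(x)\to 0$, and not merely $a_n\to 0$ for the particular averaging sequence it was applied to there; once that observation is in place, both Lemma~\ref{lemma:compact} and the present corollary follow from it in one line.
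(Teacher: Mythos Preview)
Your argument is correct: extracting the uniform convergence $\sup_{x\in\xspace} g_{n,\theta}(x)\to 0$ from the Arzel\`a--Ascoli step and then bounding the sum by $n\sup_x D_{\theta,h/\sqrt n}(x)$ works without issue.

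The paper's proof is shorter and takes a slightly different route: it simply observes that a deterministic sequence $(x_i)$ is a special case of a random sequence $(X_i)$ with $\prob(X_i=x_i)=1$, and then applies Lemma~\ref{lemma:compact} directly, since in that case $\E\,D_{\theta,h/\sqrt n}(X_i)=D_{\theta,h/\sqrt n}(x_i)$ and S-DQM reads exactly $\sum_{i=1}^n D_{\theta,h/\sqrt n}(x_i)\to 0$. So the paper treats the corollary as an immediate specialisation of the lemma's \emph{statement}, whereas you go back into the lemma's \emph{proof} and isolate the uniform convergence as a standalone fact. Your approach has the minor advantage of making the stronger intermediate conclusion $n\sup_{x\in\xspace}D_{\theta,h/\sqrt n}(x)\to 0$ explicit, which is arguably the real content here; the paper's approach has the advantage of being a one-liner.
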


\begin{proof}
    This follows from letting $\prob(X_i = x_i) = 1$ for all $i$ and then applying Lemma~\ref{lemma:compact}.
\end{proof}
\begin{proof}[Proof of Lemma~\ref{lemma:conv-as}]
    Let $\Omega$ be the sample space on which the sequence $(X_i)$ is defined. Now, take any $\omega\in\Omega$ such that $X_i(\omega)\to\gamma$. Then the (deterministic) sequence $X_i(\omega)$ is bounded, so we can find a compact set $\xspace_\omega$ such that $X_i(\omega) \in \xspace_\omega$ for all $i$. By the above Corollary, we thus have $\sum_{i=1}^n D_{\theta, h/\sqrt{n}}(X_i(\omega))\to 0$. We thus have
    $$\{\omega\in\Omega : X_i(\omega)\to\gamma\} \subseteq \{\omega\in\Omega : \sum_{i=1}^n D_{\theta, h/\sqrt{n}}(X_i(\omega)) \to 0\},$$
    which, combined with almost sure convergence of $(X_i)$, implies that $\prob(\sum_{i=1}^n D_{\theta, h/\sqrt{n}}(X_i) \to 0) = 1$, as required. The final statement follows from convergence in probability being weaker than almost sure convergence.
\end{proof}

\subsection*{Proof of Proposition~\ref{proposition:Anh_approxrep}}
\begin{proof} 
To save space we write $D_{h/\sqrt{n}} = D_{\theta,h/\sqrt{n}}$. Following the proof of Theorem~7.2 in \citet{vandervaart1998asymptotic}, define
\begin{equation}
W_{n,i} = 2\left\{\sqrt{f_{\theta + h/\sqrt{n}}(Y_i \given X_i)/f_{\theta}(Y_i \given X_i)} - 1\right\}.
\notag
\end{equation}
For $|w| < 1$, Taylor{'}s theorem yield $\log(1 + w) = w - \half w^2/(1 + \tilde{w})^2 = w - \half w^2 + w^2 \half( 1 - 1/(1 + \tilde{w})^2) = w - \half w^2 + w^2 R(2w)$, where $\tilde{w} = \tilde{w}(w)$ is some number between $0$ and $w$, and $R(w) = \half(1 - 1/(1 + \tilde{w}(w/2))^2)$. Then $R(w) \to 0 $ as $w \to 0$, and    
\begin{equation}
\begin{split}
A_n(h) & = \sum_{i=1}^n \log \frac{f_{\theta + h/\sqrt{n}}(Y_i \given X_i)}{f_{\theta}(Y_i \given X_i)}
= \sum_{i=1}^n 2 \log ( \half W_{n,i} + 1)\\
& = \sum_{i=1}^n W_{n,i} - \quart \sum_{i=1}^n W_{n,i}^2 + \half\sum_{i=1}^n W_{n,i}^2R( W_{n,i} ). 
\end{split}
\label{eq::AnwithTaylor}
\end{equation} 
Introduce the shorthands $f_{\theta,i} = f_{\theta}(y \given X_i)$, $u_{\theta,i} = u_{\theta}(y \given X_i)$, and $\dd \mu = \dd \mu(y)$, and recall that  
\begin{equation}
\E_{\theta}\, [(h^{\top} u_{\theta,i})^2\given \Falg_{i-1}]
= \int h^{\top} u_{\theta,i}u_{\theta,i}^{\top}h f_{\theta,i}\,\dd\mu 
= n h^{\top}(\langle U_{n,\cdot},U_{n,\cdot}\rangle_{i}
- \langle U_{n,\cdot},U_{n,\cdot}\rangle_{i-1})h.
\notag
\end{equation}
We show that $\sum_{i=1}^n W_{n,i}$ and $\sum_{i=1}^n W_{n,i}^2$ converge in probability to the appropriate limits, and that $\sum_{i=1}^n W_{n,i}^2 R(W_{n,i})$ converges to zero in probability. To this end, introduce the processes
\begin{equation}
\begin{split}
\xi_{n,i} & = W_{n,i}  - n^{-1/2}h^{\top}u_{\theta,i} -\E_{\theta}[W_{n,i}\given \Falg_{i-1}],\\
\zeta_{n,i} & = n^{-1}\{(h^{\top}u_{\theta,i})^2
 - \E_{\theta}\, [(h^{\top}u_{\theta,i})^2 \given \Falg_{i-1}]\},\\
\eta_{n,i} & = W_{n,i}^2 - n^{-1}(h^{\top}u_{\theta,i})^2,
\end{split}
\notag
\end{equation}
so that $W_{n,i}^2 = n^{-1}\E_{\theta}[(h^{\top}u_{\theta,i})^2\given \Falg_{i-1}] + \zeta_{n,i} + \eta_{n,i}$; and we notice that $\xi_{n,i}$ and $\zeta_{n,i}$ are martingale differences. Define the martingales $M_{n,j} = \sum_{i=1}^j \xi_{n,i}$ and $Z_{n,j} = \sum_{i=1}^j \zeta_{n,i}$ for $j=1,\ldots,n$, $n \geq 1$.
With this notation introduced we can write
\begin{align}
A_n(h) & = \sum_{i=1}^n\left\{\xi_{n,i} + n^{-1/2}h^\top u_{\theta, i} + \E_\theta[W_{n, i}\given\Falg_{i-1}]\right\} - \quart\sum_{i=1}^n\left\{\eta_{n,i} + \zeta_{n, i} + \E_\theta[n^{-1}(h^\top u_{\theta, i})^2\given\Falg_{i-1}]\right\} \\
& = h^\top U_{n,n} + \sum_{i=1}^n \E_{\theta}[W_{n,i}\given \Falg_{i-1}]  - \quart h^\top\langle U_{n,\cdot},U_{n,\cdot}\rangle_n h
+ M_{n,n} - \quart Z_{n,n}
- \quart \sum_{i=1}^n\eta_{n,i} + r_{n},
\label{app:Anhprocess}
\end{align} 
where $r_n = \half\sum_{i=1}^n W_{n,i}^2 R(W_{n,i})$. The conditional expectation appearing in~\eqref{app:Anhprocess} is 
\begin{equation}
\begin{split}
\E_{\theta}[W_{n,i}\given \Falg_{i-1}] & = \int 2\left\{\sqrt{f_{\theta + h/\sqrt{n}}}/\sqrt{f_{\theta,i}} - 1\right\}f_{\theta,i}\,\dd \mu
 \\
 & = 2\left\{\int  \sqrt{f_{\theta + h/\sqrt{n},i}\times f_{\theta,i}}
\,\dd\mu- 1\right\}\\
& =-\int \left\{\sqrt{f_{\theta + h/\sqrt{n},i}} - \sqrt{f_{\theta,i}}\right\}^2\,\dd\mu\\
&  =  -\int \left\{\sqrt{f_{\theta + h/\sqrt{n},i}} - \sqrt{f_{\theta,i}} - \half n^{-1/2}h^\top u_{\theta,i}\sqrt{f_{\theta,i}}
+ \half n^{-1/2}h^\top u_{\theta,i}\sqrt{f_{\theta,i}} \right\}^2\,\dd\mu\\
&  = -\frac{1}{4n}\E_{\theta}\,[(h^{\top}u_{\theta,i})^2\given \Falg_{i-1}] - D_{h/\sqrt{n}}(X_i)\\
&  \qquad\qquad 
- n^{-1/2}\int \left\{\sqrt{f_{\theta + h/\sqrt{n},i}} - \sqrt{f_{\theta,i}} - \half n^{-1/2}h^\top u_{\theta,i}\sqrt{f_{\theta,i}}\right\} h^\top  u_{\theta,i} \sqrt{f_{\theta,i}}\,\dd\mu.\\
\end{split}
\notag
\end{equation}
For the last term on the right hand side here, the (conditional) H{\"o}lder{'}s inequality yields
\begin{equation}
n^{-1/2}\int | \sqrt{f_{\theta + h/\sqrt{n},i}} - \sqrt{f_{\theta,i}} - \half n^{-1/2}h^\top u_{\theta,i}\sqrt{f_{\theta,i}}| |h^\top  u_{\theta,i} \sqrt{f_{\theta,i}}|\,\dd\mu \leq n^{-1/2}\sqrt{D_{h/\sqrt{n}}(X_i)  \,\E_{\theta}[(h^{\top}u_{\theta,i})^2\given \Falg_{i-1}]},
\notag
\end{equation}
and when we sum this up and use the Cauchy--Schwarz inequality,
\begin{equation}
n^{-1/2}\sum_{i=1}^n \sqrt{D_{h/\sqrt{n}}(X_i)\,\E_{\theta}[(h^{\top}u_{\theta,i})^2\given \Falg_{i-1}]} 
\leq
\sqrt{\sum_{i=1}^nD_{h/\sqrt{n}}(X_i)\langle U_{n,\cdot},U_{n,\cdot}\rangle_n} =
o_p(1)
\label{eq:cleverJensen}
\end{equation}
since $\sum_{i=1}^nD_{h/\sqrt{n}}(X_i) = o_p(1)$ by Lemma~\ref{lemma:lemma7.6ext2} and using that $\langle U_{n,\cdot},U_{n,\cdot}\rangle_n = O_p(1)$ by assumption. We have then shown that $\sum_{i=1}^n \E_{\theta}\, [W_{n,i}\given \Falg_{i-1}] = -\quart h^\top\langle U_{n,\cdot},U_{n,\cdot}\rangle_n h + o_p(1)$, and we can write
\begin{equation}
A_n(h) = h^\top U_{n,n}   - \half h^\top\langle U_{n,\cdot},U_{n,\cdot}\rangle_nh 
+ M_{n,n} - \quart Z_{n,n}
- \quart \sum_{i=1}^n\eta_{n,i} + r_{n} + o_p(1),
\notag
\end{equation} 
It remains to show that $M_{n,n} - \quart Z_{n,n} + \sum_{i=1}^n \zeta_{n,i} + r_n$ is $o_p(1)$. The conditional variance of $\xi_{n,i}$ is 
\begin{equation}
\begin{split}
\E_{\theta}[\xi_{n,i}^2\given \Falg_{i-1}]  
 & \leq \E_{\theta}\, [(  W_{n,i} - n^{-1/2}  h^\top  u_{\theta,i} )^2 \given \Falg_{i-1}]\\
&   = \int \left\{2 \left(\sqrt{f_{\theta + h/\sqrt{n},i}/f_{\theta,i}} - 1\right)
 - n^{-1/2} h^\top  u_{\theta,i} \right\}^2f_{\theta,i} \,\dd\mu\\
&   =
 4\int \left\{ \sqrt{f_{\theta + h/\sqrt{n},i}} - \sqrt{f_{\theta,i}}
 - \half n^{-1/2}h^\top   u_{\theta,i}\sqrt{f_{\theta,i}} \right\}^2 \,\dd\mu\\
 & = 4 D_{h/\sqrt{n}}(X_i).
\end{split}
\label{eq:appendix_xisqbound}
\end{equation}
Thus, $\E_{\theta}\, \xi_{n,i}^2 \leq 4 \E_{\theta}\,D_{h/\sqrt{n}}(X_i)$ which is finite by the S-DQM assumption (at least for big enough $n$) showing that $\xi_{n,i}^2$ is square integrable, and so $M_{n,j}$ is a square integrable martingale. The process $j \mapsto \langle M_{n,\cdot},M_{n,\cdot}\rangle_j$ is predictable, and $\E_{\theta}\, M_{n,n}^2 
= \E_{\theta}\,\sum_{i=1}^n \E_{\theta}\,[\xi_{n,i}^2\given \Falg_{i-1}]
= \E_{\theta}\, \langle M_{n,\cdot},M_{n,\cdot}\rangle_n$. Lenglart{'}s inequality for discrete time martingales (see, e.g., \citet[Theorem~VII.3.4, p.~496]{shiryaev1996probability}) then gives that for any $\eps,\delta > 0$
\begin{equation}
\prob_{\theta}(\max_{j \leq n}|M_{n,j}| \geq \eps)
\leq \frac{\delta}{\eps^2} 
+ \prob_{\theta}( \langle M_{n,\cdot},M_{n,\cdot}\rangle_n \geq \delta), 
\leq 
\frac{\delta}{\eps^2} 
+ \prob_{\theta}( 4\sum_{i=1}^n D_{h/\sqrt{n}}(X_i) \geq \delta),
    \notag
\end{equation}
which tends to $\delta/\eps^2$ as $n \to \infty$ since $\sum_{i=1}^n D_{h/\sqrt{n}}(X_i) = o_p(1)$. But since $\delta>0$ and $\eps>0$ were arbitrary, we conclude that $M_{n,n} = o_p(1)$. 

In order to show that $Z_{n,n} = \sum_{i=1}^n \zeta_{n,i}$ is $o_p(1)$ we use the same type of argument. By assumption, there is a $K$ such that $\E_{\theta}\,\vbb{u_{\theta,i}}^4 \leq K$ for all $i \geq 1$. Therefore 
\begin{equation}
\E_{\theta}\, \zeta_{n,i}^2 
\leq \E_{\theta}\, (h^{\top}u_{\theta,i})^4 
\leq \vbb{h}^4 \, \E_{\theta} \,\vbb{u_{\theta,i}}^4 \leq \vbb{h}^4 K,
\notag
\end{equation}
for all $i \geq 1$. The martingale $Z_{n,j}$ is therefore square integrable with predictable quadratic variation process
\begin{equation}
j \mapsto \langle Z_{n,\cdot},Z_{n,\cdot}\rangle_{j} = 
\frac{1}{n^2}\sum_{i=1}^j \var_{\theta}( (h^{\top}u_{\theta,i})^2 \given \Falg_{i-1}).
\notag
\end{equation}
By Markov{'}s inequality $\prob_{\theta}\{  
n^{-1}\sum_{i=1}^n \var_{\theta}( (h^{\top}u_{\theta,i})^2 \given \Falg_{i-1}) \geq M\}
\leq (nM)^{-1}\sum_{i=1}^n \E_{\theta}\,[(h^{\top}u_{\theta,i})^4]
\leq M^{-1}\vbb{h}^4 K$
which shows that $n \langle Z_{n,\cdot},Z_{n,\cdot}\rangle_n = O_p(1)$. Consequently $\langle Z_{n,\cdot},Z_{n,\cdot}\rangle_n = O_p(1/n) = o_p(1)$, and Lenglart{'}s inequality gives that $Z_{n,n}$ is $o_p(1)$, as desired. 

We now show that $\sum_{i=1}^n \eta_{n,i}$ is $o_p(1)$. Using H{\"o}lder{'}s inequality, followed by the bound in \eqref{eq:appendix_xisqbound} and Minkowski{'}s inequality, we have
\begin{align*}
\E_{\theta}\, [|\eta_{n,i}| \given \Falg_{i-1}] 
& = \E_{\theta}\,[ |W_{n,i}^2 - n^{-1}(h^{\tr}u_{\theta,i})^2|\given \Falg_{i-1}]
\\
& = \E_{\theta}\,[ |(W_{n,i} - n^{-1/2}h^{\tr}u_{\theta,i})(W_{n,i} + n^{-1/2}h^{\tr}u_{\theta,i})| \given \Falg_{i-1}]\\
& \leq 
\sqrt{\E_{\theta}\, [(W_{n,i} - n^{-1/2}h^{\tr}u_{\theta,i})^2 \given \Falg_{i-1}]\,\E_{\theta}\,[ (W_{n,i} + n^{-1/2}h^{\tr}u_{\theta,i})^2\given \Falg_{i-1}] }\\
& \leq 2 \sqrt{D_{h/\sqrt{n}}(X_i)\,\E_{\theta}\,[ (W_{n,i} - n^{-1/2}h^{\tr}u_{\theta,i} + 2n^{-1/2}h^{\tr}u_{\theta,i})^2\given \Falg_{i-1}]}\\
& \leq 2 \sqrt{D_{h/\sqrt{n}}(X_i)} 
\left(\sqrt{\E_{\theta}\, [(W_{n,i} - n^{-1/2}h^{\tr}u_{\theta,i})^2 \given \Falg_{i-1}]} + 2n^{-1/2} \sqrt{\E_{\theta}\,[(h^{\top}u_{\theta,i})^2 \given \Falg_{i-1}]
}\right)\\
& \leq 4 D_{h/\sqrt{n}}(X_i) + 4 n^{-1/2}\sqrt{D_{h/\sqrt{n}}(X_i)\,\E_{\theta}\,[(h^{\top}u_{\theta,i})^2 \given \Falg_{i-1}]}.
\end{align*}
Combining Lemma~\ref{lemma:lemma7.6ext2} and Eq.~\eqref{eq:cleverJensen} we see that $\sum_{i=1}^n \E_{\theta}\, [|\eta_{n,i}| \given \Falg_{i-1}] = o_p(1)$. This process is increasing and predictable, and dominates $\sum_{i=1}^j |\eta_{n,i}|$ in the sense that $\E_{\theta}[|\sum_{i=1}^j \eta_{n,i}|] = \E_{\theta}\,\sum_{i=1}^n \E_{\theta}\, [|\eta_{n,i}| \given \Falg_{i-1}]$ (see the definition in \citet[p.~496]{shiryaev1996probability}). By Lenglart{'}s inequality we then have that for any $\eps,\delta > 0$
\begin{equation}
\prob_{\theta}(|\sum_{i=1}^n \eta_{n,i}| \geq 
\eps) 
\leq \prob_{\theta}(\sum_{i=1}^n |\eta_{n,i}| \geq \eps) \leq \frac{\delta}{\eps} + \prob_{\theta}( \sum_{i=1}^n \E_{\theta}\, [|\eta_{n,i}| \given \Falg_{i-1}] \geq \delta    ) ,
\notag
\end{equation}
from which we conclude that $\sum_{i=1}^n \eta_{n,i} = o_p(1)$.
Lastly, we must show that $r_n = o_p(1)$. Suppose that $\max_{i \geq 1} |W_{i\leq n}| = o_p(1)$ as $n \to \infty$. Under this assumption, we have that $\max_{i \leq n}|R(W_{n,i})| = o_p(1)$, and therefore $|r_n|\leq \max_{i \leq n}|R(W_{n,i})|\sum_{i=1}^n W_{n,i}^2 = o_p(1) \sum_{i=1}^n W_{n,i}^2$. Here, the process $j \mapsto \sum_{i=1}^j W_{n,i}^2$ is dominated (in the sense described above) by the increasing predictable process $j \mapsto \sum_{i=1}^j \E_{\theta}\,[W_{n,i}^2 \given \Falg_{i-1}]$. We have that
\begin{equation}
\begin{split}
\E_{\theta}[W_{n,i}^2\given \Falg_{i-1}] &= 4 \int\left\{ \sqrt{f_{\theta + h/\sqrt{n},i}/f_{\theta,i}} - 1  \right\}^2 f_{\theta,i}\,\dd\mu\\
& = 4 \int\left\{ \sqrt{f_{\theta + h/\sqrt{n},i}
}- \sqrt{f_{\theta,i}}  \right\}^2 \,\dd\mu
= -4\,\E_{\theta}[W_{n,i}\given \Falg_{i-1}],
\end{split}
\notag
\end{equation}
and since $\sum_{i=1}^n \E_{\theta}[W_{n,i}\given \Falg_{i-1}] = -\quart h^{\top}\langle U_{n,\cdot},U_{n,\cdot}\rangle_nh + o_p(1)$ where, by assumption, $h^{\top}\langle U_{n,\cdot},U_{n,\cdot}\rangle_n h$ is bounded by $h^{\top} \max_{i \leq n}\E_{\theta_0}\,\{u_{\theta,i}u_{\theta,i}^{\tr}\given \Falg_{i-1}\}h = O_p(1)$, we have that $\sum_{i=1}^n \E_{\theta}[W_{n,i}\given \Falg_{i-1}] = O_p(1)$. Thus, by the identity above $\sum_{i=1}^n \E_{\theta}[W_{n,i}^2\given \Falg_{i-1}] = O_p(1)$ as $n \to \infty$. By Lenglart{'} inequality we may then, given $\eps > 0$ find $K_1,K_2$ such that 
\begin{equation}
\prob_{\theta}( \sum_{i=1}^n W_{n,i}^2 \geq K_1) \leq \frac{K_2}{K_1}
+ \prob_{\theta}( \sum_{i=1}^n \E_{\theta}\,[W_{n,i}^2 \given \Falg_{i-1}] \geq K_2) < \eps, 
\notag
\end{equation}
for $n$ large enough, which is to say that $\sum_{i=1}^n W_{n,i}^2 = O_p(1)$. Thus, given that $\max_{i \leq n}W_{n,i} = o_p(1)$, we have $|r_n| \leq o_p(1) \sum_{i=1}^n W_{n,i}^2 = o_p(1)O_p(1) = o_p(1)$ as $n \to \infty$. It remains to show that the assumption holds, namely that $\max_{i \leq n}|W_{n,i}| \overset{p}\to 0$ as $n \to \infty$. Write 
\begin{equation}
W_{n,i}^2 = n^{-1}(h^{\tr}u_{\theta,i})^2
+ \{ W_{n,i}^2 - n^{-1}(h^{\tr}u_{\theta,i})^2\}
= n^{-1}(h^{\tr}u_{\theta,i})^2 + \eta_{n,i} \leq n^{-1}(h^\top u_{\theta,i})^2 + |\eta_{n,i}|.
\notag
\end{equation}
By the triangle inequality and by Markov{'}s inequality, we find that for each $\eps >0$
\begin{align*}
\sum_{i=1}^n \prob_{\theta}( |W_{n,i}| \geq \sqrt{2}\eps \given \Falg_{i-1}) 
& = \sum_{i=1}^n \prob_{\theta}( W_{n,i}^2 \geq 2\eps^2 \given \Falg_{i-1})
 \\
& \leq \sum_{i=1}^n \prob_{\theta}( n^{-1}(h^{\tr}u_{\theta,i})^2
+ | \eta_{n,i}| \geq 2\eps^2  \given \Falg_{i-1})\\
& \leq \sum_{i=1}^n \prob_{\theta}( (h^{\top} u_{\theta,i})^2 \geq n\eps^2 \given \Falg_{i-1}) + 
\sum_{i=1}^n \prob_{\theta}( | \eta_{n,i}| \geq \eps^2 \given \Falg_{i-1}  ) \\
& \leq \frac{1}{n \eps^2}\sum_{i=1}^n \E_{\theta}\,[(h^{\top}u_{\theta,i})^2I\{(h^{\top}u_{\theta,i})^2 \geq n\eps^2\}\given \Falg_{i-1}]
+ \frac{1}{\eps^2} \sum_{i=1}^n \E_{\theta}\,[ |\eta_{n,i}|\given \Falg_{i-1}]\\
& \leq \frac{\vbb{h}^2}{n\eps^2}\sum_{i=1}^n \E_{\theta}\,[\vbb{u_{\theta,i}}^2I\{\vbb{u_{\theta,i}} \geq \sqrt{n}\eps/\vbb{h}\}\given \Falg_{i-1}] + \frac{2}{\eps} \sum_{i=1}^n \E_{\theta}\,[ |\eta_{n,i}|\given \Falg_{i-1}].
\end{align*}
Here we recognise the Lindeberg condition, which says that $n^{-1}\sum_{i=1}^n \E_{\theta}\,[\vbb{u_{\theta,i}}^2I\{\vbb{u_{\theta,i}} \geq \sqrt{n}\eps\}\given \Falg_{i-1}] = o_p(1)$ for each $\eps >0$; and we saw above that $\sum_{i=1}^n \E_{\theta}\,[ |\eta_{n,i}|\given \Falg_{i-1}] = o_p(1)$. From the display above, this shows that $\sum_{i=1}^n \prob_{\theta}( |W_{n,i}| \geq \sqrt{2}\eps \given \Falg_{i-1}) = o_p(1)$ for each $\eps>0$, which by Lemma~2.2 in~\citet[p.~81]{helland1982central} is equivalent to $\max_{i \leq n}|W_{n,i}| = o_p(1)$ as $n$ tends to infinity. This concludes the proof.
\end{proof}

\subsection*{Proof of Lemma~\ref{lemma:irreducible}}
\begin{proof}
    The first step of the proof is to show that for a sufficiently large number $i$, the distribution of $X_i$ emits a density (absolutely continuous with respect to the Lebesgue measure) which is nonzero on the interval $[1/2^i, 1 - 1/2^i]$. Assume first that $X_1 = 1/2 = \xi$. Note that the distribution of $X_2$ emits a density which is nonzero on the set
    $$\mathcal{I}_2 = \left[1/4, 1/4 + \eps\right]\cup\left[3/4 - \eps, 3/4\right],$$
    and that the distribution of $X_3$ emits a density which is nonzero on the set
    $$\mathcal{I}_3 = \left[1/8, 1/8 + 3\eps/2\right]\cup\left[3/8 - \eps/2, 3/8 + \eps\right]\cup\left[5/8 - \eps, 5/8 + \eps/2\right]\cup\left[7/8 - 3\eps/2, 7/8\right].$$
    In general, the distribution of $X_i$ emits a density which is nonzero on the set $\mathcal{I}_i$, where $\mathcal{I}_i$ can be written as the union of $2^{i-1}$ closed intervals, each of length $\mathcal{L}_i$, where
    $$\mathcal{L}_i = \left\{1 + 1/2 + \cdots + 1/{2^{i-2}}\right\}\eps = 2\left\{1 - 1/{2^{i-1}}\right\}\eps.$$
    We will show that for a sufficiently large value of $i$, all the intervals in $\mathcal{I}_i$ overlap, so that $\mathcal{I}_i = [1/2^i, 1 - 1/2^i]$.

    Label the intervals in $\interval_i$ from left to right as $\interval_{i,1}, \dots, \interval_{i,2^{i-1}}$. Note that if we set $\eps=0$, then we would simply recover the dyadic rationals $\mathcal{I}_i = \dyadic_i$. This induces a natural map 
        $$\phi : \dyadic_i \to \{\interval_{i, j} : 1\leq j \leq i\}$$
    mapping $1/2^i \mapsto [1/2^i, 1/2^i + 2\{1 - 1/2^{i-1}\}\eps]$, etc.

    Now, consider only the dyadic rationals $d\in\dyadic_i$ with $d < 1/2$. Note that $\phi(d)$ extends further to the right of $d$ than to the left of $d$. That is, if $\phi(d) = [p, q]$, then $q - d \geq d - p$. Indeed, we see that any such interval $\phi(d)$ must have a term of $+\eps$ added to its right most endpoint. We see this explicitly in the expressions for $\interval_2$ and $\interval_3$ above. Since each interval is of length $\mathcal{L}_i = 2\left\{1 - 1/2^{i-1}\right\}\eps$, this means that all the intervals $\{\phi(d) : d < 1/2\}$ will overlap provided we have 
    \begin{equation}\label{eq:condition_overlap}
        \left\{1 - 1/{2^{i-1}}\right\}\eps > 1/{2^{i-1}}.
    \end{equation}
    Indeed, the left hand side is half the length of each interval $\phi(d)$, and the right hand side is the distance between two consecutive elements in $\dyadic_i$. Setting
        $$N > \log_2(1 + 1/\eps) + 1$$
    ensures that \eqref{eq:condition_overlap} holds for all $i\geq N$. By the same argument, the intervals on the right hand side of $1/2$ will also overlap for $i \geq N$. Since the two middle interval 
    $$\phi\left(\frac{2^{i-2} - 1}{2^{i-1}}\right)\quad\text{and}\quad\phi\left(\frac{2^{i-2} + 1}{2^{i-1}}\right)$$
    will also overlap for such $i$, we conclude that all the interval overlap for $i\geq N$. Hence, for large $i$, the distribution of $X_i$ emits a density which is nonzero on the interval $\interval_i = [1/2^i, 1 - 1/2^i]$.

    The same argument can be used for any starting point $X_1\in[0,1]$, essentially only scaling the \eqref{eq:condition_overlap} by a universally bounded constant. Thus, at any point $x\in[0,1]$, the $n$-step kernel $k^n(x, \cdot)$ will emit a density which is nonzero on $[a_n, b_n]$, where $a_n \to 0 $ and $b_n\to 1$ as $n\to\infty$. This proves that the chain must be irreducible with respect to the Lebesgue measure.
\end{proof}

\subsection*{Proof of Lemma~\ref{lemma:drift_function}}

\begin{proof}
    We have 
        $$\int V(y) k(x, \diff y) - V(x) + 1 = \frac1\eps H_x\int_{\frac{x}2}^{\frac{x}2 + \eps} (my+1)\,\diff y \\ + \frac1\eps[1 - H_x]\int_{\frac{x+1}2-\eps}^{\frac{x+1}2} (my+1)\,\diff y - (mx+1) + 1.
        $$
    Now, letting $x=1$ and simplifying, we have
    \begin{equation}\label{eq:driftx1}
        \int V(y) k(1, \diff y) - V(1) + 1 = m\left[\eps\left( H_1 - 1/2\right) - H_1/2\right] + 1.
    \end{equation}
    In the Markovian Langlie design (with $a=0$ and $b=1$ without loss of generality), it is assumed that $0 < -\alpha/\beta < 1$, and so in particular, $H_1 > 1/2$. Thus, we can choose $\eps>0$ such that
    $$\eps < \frac{H_1}{2(H_1 - 1/2)}$$
    which makes the square bracket in \eqref{eq:driftx1} negative. We can then choose
    $$m > \left[\eps\left(H_1 - 1/2\right) - H_1/2\right]^{-1},$$
    which shows that $\int V(y) k(1, \diff y) - V(1) + 1 < 0$. By continuity of the kernel, this must also hold in an open neighbourhood of $1$.    
\end{proof}

\bibliographystyle{plainnat}
\bibliography{bibliography}

\end{document}